\newtheorem{theorem}{Theorem}[section]
\newtheorem{lemma}[theorem]{Lemma}
\newtheorem{proposition}[theorem]{Proposition}
\newtheorem{corollary}[theorem]{Corollary}
\theoremstyle{definition}
\newtheorem{remark}[theorem]{Remark}
\newtheorem{definition}[theorem]{Definition}
\numberwithin{equation}{section}
\newcommand{\h}{\mathcal{H}}
\newcommand{\ka}{\mathcal{K}}
\newcommand{\bhk}{\mathcal{B}(\h,\ka)}
\def\ata{A^{\theta,\alpha}}
\def\nk0t{\|\tilde k_0^\theta\|^{-2}}
\def\kda{K_\alpha}
\def\kdt{K_\theta}
\def\tto{truncated Toeplitz operator}%, TTO,
\def\dtto{dual truncated Toeplitz operator}
\def\b1{\mathcal{B}_1(\kdt,\kda)}
\newcommand{\dc}{P^- D_{|\theta H^2}}
\newcommand{\dd}{P^- D_{| H^2_-}}
\newcommand{\ak}{\mathbb A_{1/k}}
\newcommand{\dtha}{P_{\alpha H^2}D_{|\theta H^2}}
\newcommand{\s}{{\mathcal S}}
\newcommand{\daa}{P_{\alpha H^2}D_{|\theta H^2}}
\newcommand{\dba}{P_{\alpha H^2}D_{|H^2_-}}
\title[Shift invariance and reflexivity]{Shift invariance and reflexivity  of compressions of multiplication operators}
\author[M. C. C\^amara, K. Kli\'s--Garlicka, B. \L anucha, and M. Ptak]{M. Cristina C\^amara, Kamila Kli\'s--Garlicka, Bartosz \L anucha, and Marek Ptak}
\address{M. Cristina C\^{a}mara, Center for Mathematical Analysis, Geometry and Dynamical Systems\\ Mathematics
Department, Instituto Superior T\'ecnico, Universidade de Lisboa\\ Av. Rovisco Pais, 1049-
001 Lisboa, Portugal.}\email{ccamara@math.ist.utl.pt}
\address{Kamila Kli\'s--Garlicka, Department of Applied Mathematics,
University of Agriculture, ul. Balicka 253c\\ 30-198 Krak\'ow, Poland.}
\email{rmklis@cyfronet.pl}
\address{
Bartosz {\L}anucha, Institute of Mathematics,
 Maria Curie-Sk{\l}odowska University, pl. M.
Curie-Sk{\l}odowskiej 1, 20-031 Lublin, Poland}
\email{bartosz.lanucha@poczta.umcs.lublin.pl}
\address{Marek Ptak, Department of Applied Mathematics,
University of Agriculture, ul. Balicka 253c\\ 30-198 Krak\'ow, Poland.}\email{rmptak@cyf-kr.edu.pl}
\thanks{The work of the first author
was partially supported by FCT/Portugal through UID/MAT/04459/2020. The research of the second and the fourth authors was financed by the Ministry of Science and Higher Education of the Republic of Poland.}
\date{\today}
\subjclass[2010]{47B32, 47B35, 30H10.}
\keywords{model space,
truncated Toeplitz operator, dual truncated Toeplitz operator, shift invariant, reflexive}
\begin{document}

\begin{abstract}
The property of being shift invariant and being reflexive or transitive in the case of the space of (asymmetric) truncated Toeplitz operators,
and the space of (asymmetric) dual truncated operators is investigated. Most of the results obtained are new even for the symmetric case. A characterization of asymmetric dual truncated Toeplitz operators is also given.
\end{abstract}
\maketitle

\section{Introduction}
The investigation of {\tto}s (TTO) was inspired by \cite{sar1}. The importance of this class of operators in both symmetric and asymmetric cases was shown by many important papers (for instance \cite{sar1,GMR,BBK,bess,BCKP,BM}). The history of the dual case is shorter, however there are many results obtained in this direction (for example \cite{ding, hu,DQS,DQS2,CKLPcom,CKLPd,CKLPinv}). For arguments for the importance of (asymmetric) {\dtto}s and their connection to physics one can search in \cite{CKLPcom}.

The aim of this paper is to look more universally at restrictions of multiplication operators on the classical $L^2=L^2(\mathbb{T},m)$ space, with  $\mathbb{T}$ the unit circle and $m$ the normalized Lebesgue measure. Being inspired by the well known classical result of Brown and Halmos \cite{BH} which characterized  Toeplitz operators on the Hardy space $H^2\subset L^2$, and Sarason's definition of shift invariance \cite{sar1}  used to characterize truncated Toeplitz operators, we extend (Definition \ref{dfshinv})  Sarasons's definition  and prove the property of being shift invariant  for other classes of restrictions of the multiplication operators. %The novelty of this approach is that we obtain the characterization by using specified rank-two operators and
As a consequence  we obtain, for (asymmetric)  truncated Toeplitz operators and (asymmetric) dual truncated Toeplitz operators, properties associated with the invariant subspace problem (namely, transitivity and $2$--reflexivity).

 % We begin the investigation of shift invariance by extending  the definition  given in \cite{sar1} for operators on model spaces to operators between any subspaces of $L^2$ on the unite circle $\mathbb{T}$ (Definition \ref{dfshinv}).  In the case of classical Toeplitz operators  our definition gives Brown--Halmos characterization of Toeplitz operators.

Sarason \cite{sar4} introduced a definition of reflexivity for a single operator and for operator algebras. Roughly speaking, such an algebra or  operator (i.e., the algebra generated by this operator and the identity)  have such a rich set of invariant subspaces that it determines the algebra itself. In contrast, transitivity of an algebra means that it has no common nontrivial invariant subspaces. In the language of a preanihilator (thus in the language which can concern also subspaces of operators) reflexivity means that there is a  huge set of rank--one operators in the preanihilator (so huge that this set generates the whole preanihilator). On the contrary, transitivity means that there are no rank--one operators in the preanihilator. The space of all classical Toeplitz operators on $H^2$ turned out to be transitive, see \cite{AP}, but it has a very rich set of rank--two operators in the preanihilator. Following
\cite{Larson} the word "rich"  means that rank--two operators are linearly dense in the preanihilator and then the subspace is called $2$--reflexive.

Most of the results in the paper are new even for the symmetric case, but we decided to present them as corollaries of results for the asymmetric case.
The structure of the paper is the following. For the definitions of properties mentioned above and classes of operators see Section 2.
In Section 3 (Theorem \ref{th8.12}) we give a characterization of asymmetric dual truncated Toeplitz operators (ADTTO) extending the  characterization given by the same authors in \cite{CKLPcom} for the symmetric case. This result is a crucial tool used in Section 5.
Having the general definition of shift invariance (Definition \ref{dfshinv}) we will show in Section 4 that (asymmetric) {\dtto}s are shift invariant - Corollary \ref{dshinv}. In contrast to the class of asymmetric truncated Toeplitz operators which can be characterized by shift invariance, Theorem \ref{sh-inv} (see \cite{BM}), the set of all shift invariant operators between orthogonal complements of two (possibly different) model spaces is larger, and Theorem \ref{shinth} gives a description of this new class, which also appears in the context of intertwining property (see \cite{CKLPinv}).  Using shift invariance we will show that  the space of all (asymmetric) {\tto}s is also $2$--reflexive. Next we will prove that the space of (asymmetric) {\dtto}s, even though it is not characterized by shift invariance, is $2$--reflexive. The new approach is to translate the conditions given in  Theorem \ref{th8.12} to the language of rank-two operators.

 Unfortunately the property of $2$--reflexivity is not  hereditary for subspaces. However, using the functional calculus  for (asymmetric) {\dtto}s (Theorem \ref{funcaldual}) and Bourgain's theorem we will show that the space of (asymmetric) {\dtto}s has property $\mathbb{A}_{\frac 12}(1)$ (Definition \ref{propA}) which gives $2$--reflexivity of each subspace of  (asymmetric) {\dtto}s (for example the space of analytic dual truncated Toeplitz operators).

\section{Notations and definitions}
Let $\h$, $\ka$ denote complex separable Hilbert spaces, and let $\bhk$ be the Banach space of all bounded linear operators from $\h$ to $\ka$.
A {\it rank--one} operator from $\ka$ to $\h$ will be usually denoted by $f\otimes g$, where $f\in \h$, $g\in \ka$, and it acts as $(f\otimes g)x=\langle x,g\rangle f$ for $x\in \ka$.
The weak* topology (ultraweak topology) in $\mathcal{B}(\h,\ka)$ is given by {\it trace class} operators of the form $t=\sum_{n=0}^\infty f_n\otimes g_n$ with $f_n\in\h$, $g_n\in\ka$ such that $\sum_{n=0}^\infty \|f_n\|^2< \infty$, $\sum_{n=0}^\infty \|g_n\|^2< \infty$.   Denote by $\mathcal{B}_1(\ka,\h)$ the space of all such trace class operators and by  $\|\cdot\|_1$  the trace norm. Denote also  by $\mathcal{F}_k$ the set of all operators in $\mathcal{B}_1(\ka,\h)$ of rank at most $k$.
Then $\mathcal{B}(\h,\ka)$  is a dual space to $\mathcal{B}_1(\ka,\h)$
%Then $\big(\mathcal{B}_1(\ka,\h)\big)^{'}=\mathcal{B}(\h,\ka)$
 (see \cite[Chapter 16]{MV} for details) and the dual action is given by \[\bhk\ \times \ \mathcal{B}_1(\ka,\h)\ni(T,t)\ \mapsto\  <T,t>%=\sum_{n=0}^\infty <T,f_n\otimes g_n>
=\sum_{n=0}^\infty \langle Tf_n,g_n\rangle.\]
 Let $\mathcal{S}\subset\bhk$. Then the {\it preannihilator} of $\mathcal{S}$ is given by
$$\mathcal{S}_\bot=\{t\in\mathcal{B}_1(\ka,\h)\colon <T,t>=0 \text{ for all } T\in \mathcal{S} \}.$$
Let $\mathcal{M}\subset\mathcal{B}_1(\ka,\h)$. Then the {\it annihilator} of $\mathcal{M}$ is given by
$$\mathcal{M}^\bot=\{T\in\bhk\colon  <T,t>=0 \text{ for all } t\in \mathcal{M} \}.$$
Obviously $\mathcal{S}\subset\bhk$ is weak*--closed if and only if $\mathcal{S} = (\mathcal{S}_\perp)^\perp$.

Now following \cite{Larson} we will give the reflexivity--transitivity definitions equivalent to the classical ones given in \cite{sar4, RR, LS}.
\begin{definition}\label{dfref}
Let $\mathcal{S}\subset\bhk$ be a weak${}^*$--closed subspace. Then
\begin{enumerate}\item $\mathcal{S}$ is called {\it transitive}, if
$\mathcal{S}_\bot\cap \mathcal{F}_1=\{0\}$,
\item $\mathcal{S}$ is called {\it reflexive}, if
$\mathcal{S}=\big(\mathcal{S}_\bot\cap \mathcal{F}_1\big)^\perp$,
\item $\mathcal{S}$ is called {\it $k$--reflexive}, if
$\mathcal{S}=\big(\mathcal{S}_\bot\cap \mathcal{F}_k\big)^\perp$ for $k=2,3,4\dots$.
\end{enumerate}
\end{definition}

Let $L^2$ be the space of  measurable  and square integrable functions with respect to the normalized Lebesgue measure $m$ on the unit circle $\mathbb{T}$. Let $H^2$ be  the Hardy space, that is, the space of those functions from $L^2$, which can be extended to functions  holomorphic on the whole unit disk $\mathbb{D}$. By $P^+$ we will denote the natural orthogonal projection from $L^2$ onto $H^2$ and let $P^-=I_{L^2}-P^+$. For any $\varphi\in L^\infty$ define the {\it Hankel operator} with the symbol $\varphi$ as $H_\varphi f=P^-(\varphi f)$ and the {\it Toeplitz operator} with the symbol $\varphi$ as $T_\varphi f=P^+(\varphi f)$ for all $f\in H^2$. Recall that the unilateral shift  $S$ and the Toeplitz operator $T_z$ are unitarily equivalent.
\begin{definition}\label{dfshinv}
Let $\mathcal{K}_1,\mathcal{K}_2$ be subspaces of $L^2$. An operator   $A\in \mathcal{B}(\mathcal{K}_1,\mathcal{K}_2)$ will be called {\it shift invariant} if, for any $f\in\mathcal{K}_1$ and $g\in\mathcal{K}_2$ such that $zf\in\mathcal{K}_1$ and $zg\in\mathcal{K}_2$, the following holds
\begin{equation}\label{shinvt}
  \langle A(zf),zg\rangle=\langle Af,g\rangle.
\end{equation} \end{definition}

 Let $\theta$ be a nonconstant inner function, and denote the subspace $K_\theta:=H^2\ominus \theta H^2$, which is called a {\it model space} (for a constant inner function $\theta$ we will use the convention $\kdt=\{0\}$).   By Beurling's theorem model spaces are exactly those spaces which are invariant for $S^*=T_{\bar z}$. In what follows $P_{\theta}$ will denote the orthogonal projection from $L^2$ onto
 $K_{\theta}$. Recall also that $L^2=K_{\theta}\oplus(K_{\theta})^{\perp}=K_{\theta}\oplus\theta H^2\oplus H^2_-$ where $H_-^2=\overline{zH^2}$ and let $P_{{\theta}}^\perp$, $ P^-$, $P_{\theta H^2}$ be projections onto
 $K_{\theta}^{\perp}$, $H^2_-$ and $\theta H^2$, respectively.

For  $\varphi\in L^2$  let $M_\varphi :D(M_\varphi)\to L^2$ be the densely defined multiplication operator $M_\varphi f=\varphi f$, where $D(M_\varphi)=\{f\in L^2\colon \varphi f\in L^2\}$. Note that $L^\infty\subset D(M_\varphi)$ for all $\varphi\in L^2$.
Recall from \cite{GMR} that $K_\theta^\infty:=K_\theta\cap L^\infty $ is a dense subset of $K_\theta$. %Since $\bar z\overline{H^\infty}$ is a dense subset of $H^2_-$ and $\theta H^\infty $ is a dense subset of $\theta H^2$, it follows that
The space $K_\theta^\perp\cap L^\infty $ is also a dense subset of $K_\theta^\perp$ as it was observed in \cite{CKLPcom}.
 %(For constant inner function $\theta $ we set $\kdt\{0\}$.)
 For nonconstant inner functions $\theta, \alpha$ and for $\varphi\in L^2$ define
$$A_{\varphi}^{\theta,\alpha}=P_{\alpha}M_{\varphi|K_{\theta}\cap L^{\infty}},\ % B_{\varphi}^{\theta,\alpha}=P_{{\alpha}}^\perp M_{\varphi|K_{\theta}\cap L^{\infty}}
\ \text {and}\  D_{\varphi}^{\theta,\alpha}=P_{{\alpha}}^\perp M_{\varphi|K_{\theta}^{\perp}\cap L^{\infty}}.$$
If $A_{\varphi}^{\theta,\alpha}$ extends to the whole $K_\theta$ as a bounded operator, it is called  an {\it asymmetric truncated Toeplitz operator} (ATTO, see \cite{part, part2, BCKP, blicharz1}). Similarly, % if $B_{\varphi}^{\theta,\alpha}$ extends to a bounded operator from $K_\theta$ to $K_\alpha^\perp$,  it is called  a {\it big asymmetric truncated Hankel  operator} (ATHO, see \cite{MYZ, CKLPcom}), and
if $D_{\varphi}^{\theta,\alpha}$ extends to the whole $K_\theta^\perp$ as a bounded operator, it is called  an {\it asymmetric dual truncated Toeplitz operator} (ADTTO, see \cite{CKLPcom, CKLPd}). It is easy to verify that $(A_{\varphi}^{\theta,\alpha})^*=A_{\bar\varphi}^{\alpha,\theta}$ and $(D_{\varphi}^{\theta,\alpha})^*=D_{\bar\varphi}^{\alpha,\theta}$. % and $(B_{\varphi}^{\theta,\alpha})^*=P_{\theta}M_{\bar\varphi|K_{\alpha}\cap L^{\infty}}$.
%Note that operator $A_{\varphi}^{\theta}$ and $B_{\varphi}^{\theta}$ can be bounded even they have not a bounded symbol (see \cite{bar}, Remark \ref{}), but $D_{\varphi}^{\theta}\in \mathcal{B}(K_\theta^\perp)$ if and only if  $\varphi\in L^\infty$ (see \cite{ding}).
Let us fix the notation
\begin{align*}
  \mathcal{T}(K_\theta,K_\alpha)&=\{A_{\varphi}^{\theta,\alpha}\colon\, \varphi\in
  L^2\ \mathrm{and}\ A_{\varphi}^{\theta,\alpha}\
  \mathrm{is\ bounded}\},\\
%\mathcal{T}(K_\theta,K^\perp_\alpha)&=\{B_{\varphi}^{\theta}\colon\, \varphi\in
 % L^2\ \mathrm{and}\ B_{\varphi}^{\theta,\alpha}\
  %\mathrm{is\ bounded}\},\\
   \mathcal{T}(K^\perp_\theta, K^\perp_\alpha)&=\{D_{\varphi}^{\theta,\alpha}\colon\, \varphi\in
  L^2\ \mathrm{and}\ D_{\varphi}^{\theta,\alpha}\
  \mathrm{is\ bounded}\}%=\{D_{\varphi}^{\theta}\colon\, \varphi\in L^\infty\}
  .\end{align*}
  Recall that $D_{\varphi}^{\theta,\alpha}$ is bounded if and only if $\varphi\in L^\infty$ (see \cite{ding,CKLPd}).
In the symmetric case, i.e., $\theta=\alpha$, we will use a shorter notation:
$A_{\varphi}^{\theta}$, $D_{\varphi}^{\theta}$ and $\mathcal{T}(K_\theta)$ and $\mathcal{T}(K^\perp_\theta)$, respectively. We will also use names of such operators without the word "asymmetric" and use the abbreviations TTO and DTTO, respectively.

There is a natural conjugation (an antilinear isometric involution) on $L^2$ connected with each model space (see for instance \cite{GP,CKLPcon2, CKLPconj1}).
For an inner function $\theta$ define
\begin{equation}\label{ctheta} C_{\theta}f=\theta\bar z \bar f\quad \text{for}\quad f\in L^2.\end{equation}
Then $\langle C_\theta f, C_\theta g\rangle=\langle  g, f\rangle$ for $f,g\in L^2$. One can easily verify that
\begin{equation}\label{31}
C_{\theta}M_{\varphi}C_{\theta}=M_{\bar{\varphi}}.
\end{equation}

It is well known (\cite{GP, CKLPconj1}) that $C_{\theta}$ preserves $K_{\theta}$.  Moreover, $C_{\theta}(\theta H^2)=H^2_-$ and $C_{\theta}(H^2_-)=\theta H^2$, so $C_{\theta}$ also preserves $(K_{\theta})^{\perp}$.

\section{A characterization of $D^{\theta,\alpha}_\varphi$}

In this section our goal is to give a characterization of asymmetric dual truncated Toeplitz operators (Theorem \ref{th8.12}). A general idea of this characterization and its proof is similar to \cite[Theorem 27]{CKLPcom}. As in \cite{CKLPcom}, we will first consider compressions of ADTTO's to certain subspaces of $K_\theta^\perp$ and $K_\alpha^\perp$.

 Let $\theta,\alpha$ be two inner functions. Using the decompositions  $K_\theta^\perp=\theta H^2\oplus H^2_- $ and $K_\alpha^\perp=\alpha H^2\oplus H^2_- $ one can write each operator $D\in \mathcal{B}(K_\theta^\perp, K_\alpha^\perp)$ as a matrix
$$D=\begin{bmatrix}\dtha&\dba\\ \dc &\dd\end{bmatrix}.$$
In particular, for $\varphi\in L^{\infty}$, we obtain
$$D_\varphi^{\theta,\alpha}=\begin{bmatrix}\hat{T}_\varphi^{\theta,\alpha}&\check{\Gamma}_{\varphi}^\alpha\\ \hat{\Gamma}_\varphi^\theta &\check{T}_\varphi\end{bmatrix}=\begin{bmatrix}\hat{T}_\varphi^{\theta,\alpha}&(\hat{\Gamma}_{\bar{\varphi}}^{{\alpha}})^*\\
\hat{\Gamma}_\varphi^{\theta}&\check{T}_\varphi\end{bmatrix},$$
where
\begin{equation}\label{hat} \hat{T}_\varphi^{\theta,\alpha}=P_{\alpha H^2}M_{\varphi|\theta H^2}, \quad \hat{\Gamma}_\varphi^{\theta}=P^-M_{\varphi|\theta H^2}
\end{equation}
and
\begin{equation}\label{czek}
 \check{\Gamma}_\varphi^{\alpha}=P_{\alpha H^2}M_{\varphi| H^2_-}, \quad  \check{T}_\varphi=P^-M_{\varphi| H^2_-}.
\end{equation}
Let us denote
\begin{align*}
	\mathcal{T}(\theta H^2,\alpha H^2)&
	=\{\hat{T}\in \mathcal{B}(\theta H^2,\alpha H^2):\   \hat{T}=\hat{T}_\varphi^{\theta,\alpha}\ \text{for some }\varphi\in L^\infty\},\\
\mathcal{T}( H^2_-)&=\{\check{T}\in \mathcal{B}( H^2_-):\  \check{T}=\check{T}_\varphi\ \text{for some }\varphi\in L^\infty\},\\
\mathcal{T}(\theta H^2,H^2_-)&=\{\hat{\Gamma}\in \mathcal{B}(\theta H^2,H^2_-): \ \hat{\Gamma}=\hat{\Gamma}_{\varphi}^{\theta}
=P^-M_{\varphi|{\theta H^2}}\ \text{for }\varphi\in L^\infty\},\\
\mathcal{T}(H^2_-,\alpha H^2)&=\{\check{\Gamma}\in \mathcal{B}({H^2_-},\alpha H^2): \  \check{\Gamma}=\check{\Gamma}_{\varphi}^{\alpha}=P_{\alpha H^2}M_{\varphi|{H^2_-}}\ \text{for }\varphi\in L^\infty\}.
\end{align*}
As in \cite{CKLPcom} we will write $\mathcal{T}(\theta H^2)$ and $\hat{T}_\varphi^{\theta}$ instead of  $\mathcal{T}(\theta H^2,\theta H^2)$ and $\hat{T}_\varphi^{\theta,\theta}$, respectively.

Each of the operators in \eqref{hat} and \eqref{czek} can be similarly defined for arbitrary $\varphi\in L^2$. In that case $\hat{\Gamma}_\varphi^{\theta}$ and $\hat{T}^{\theta,\alpha}_\varphi$ are defined on a dense subset $\theta H^\infty$ of $\theta H^2$, while $\check{\Gamma}_\varphi^{\alpha}$ and $\check{T}_\varphi$ are defined on $H_-^\infty=H^2_-\cap L^\infty=\overline{zH^\infty}$ which is a dense subset of $H^2_-$. However, in a moment we will justify the fact that, in a sense, one needs only to consider symbols from $L^\infty$.

%To prove Theorem \ref{th8.12} as in \cite{CKLPcom} we will use properties of compressions  of DATTO's. Most of them are like in symmetric case presented in Section 6 of \cite{CKLPcom}. Here we will present only those which are characteristic for asymmetric case. To have a general view and a complete set of properties one have to combine them with those in \cite[Section 6]{CKLPcom}.
%Similarly as in \cite{CKLPcom} it can be shown that both $\mathcal{T}({\theta H^2},\alpha H^2)$ and $\mathcal{T}({H^2_-})$ are  isomorphic to the space of classical Toeplitz operators $\mathcal{T}({H^2})$ while $\mathcal{T}(\theta H^2,H^2_-)$ and $\mathcal{T}(H^2_-,\alpha H^2)$ are isomorphic to the space of all Hankel operators $\mathcal{H}(H^2,H^2_-)$. %For completeness these isomorphisms are described in the two following propositions.

Recall firstly that the operator $J$, defined by $J \colon L^2\to L^2$, $Jf(z)=\bar z \overline{f(z)}$, $z\in \mathbb{T}$, is an antilinear involution. Moreover, $J^{-1}=J=J^\sharp$ (by $\sharp$ we denote the antilinear adjoint) and $J(H^2)=H_-^2$, $J(H_-^2)=H^2$. More properties of antilinear operators can be found for example in \cite{PSW}.
Recall also that the multiplication operator $M_\theta$ maps $H^2$ bijectively onto $\theta H^2$ and $M^{-1}_\theta=M_{\bar\theta}$. Moreover, each of the operators $J$, $M_\theta$ and $M_{\bar{\theta}}$ preserves $L^\infty$.

For $\varphi\in L^2$ denote by $T_\varphi$ and $H_\varphi$ the classical Toeplitz and Hankel operators with the symbol $\varphi$, respectively (both densely defined on $H^\infty$).

The following proposition is not difficult to verify (compare with \cite[Proposition 20]{CKLPcom}).

\begin{proposition}\label{propc2} Let $\theta$ and $\alpha$ be two nonconstant inner functions and let $\varphi\in L^2$. Then
\begin{enumerate}%[{(i)}]
  \item $\hat{T}^{\theta,\alpha}_{\varphi|\theta H^\infty}=P_{\alpha H^2}M_{\varphi|\theta H^\infty}=M_\alpha T_{\bar\alpha\varphi\theta} M_{\bar\theta|\theta H^\infty}=M_\alpha T_{\bar\alpha\varphi |\theta H^\infty}$;
   \item $\hat{\Gamma}^\theta_{\varphi|\theta H^\infty}=P^-M_{\varphi|\theta H^\infty}=H_{\varphi\theta}M_{\bar\theta|\theta H^\infty}$;
   \item $\check{T}_{\varphi|H_-^\infty}=P^-M_{\varphi|H^\infty_-}=JT_{\bar\varphi}J_{|H_-^\infty}$;
   \item $\check{\Gamma}^\alpha_{\varphi|H^\infty_-}=P_{\alpha H^2}M_{\varphi| H^\infty_-}=M_\alpha H^*_{\alpha\bar\varphi|H^\infty_-}$.
	\end{enumerate}
\end{proposition}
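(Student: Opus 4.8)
The plan is to verify each of the four identities by unwinding the definitions of the operators $M_\theta$, $M_{\bar\theta}$, $J$ and the projections $P_{\alpha H^2}$, $P^-$ on a dense subspace of the domain, and then to observe that boundedness (or density) lets us drop from $L^\infty$ to $L^2$-symbols via restriction to $\theta H^\infty$ or $H_-^\infty$. Since all four operators are defined a priori only on these dense subspaces when $\varphi\in L^2$, I will work entirely on $\theta H^\infty$ (resp. $H_-^\infty$) and use only that $M_\theta$, $M_{\bar\theta}$, $J$ map $L^\infty$ into $L^\infty$, so every composition below makes sense on the claimed dense subspace.

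For (1): take $f\in \theta H^\infty$, write $f=\theta h$ with $h\in H^\infty$. Then $\hat T^{\theta,\alpha}_\varphi f=P_{\alpha H^2}(\varphi\theta h)$. Writing $P_{\alpha H^2}=M_\alpha P^+ M_{\bar\alpha}$ (which holds because $g\mapsto M_{\bar\alpha}g$ is a unitary from $\alpha H^2$ onto $H^2$ and from $L^2$ onto $L^2$ carrying $K_\alpha^\perp$-complement pieces appropriately), we get $P_{\alpha H^2}(\varphi\theta h)=M_\alpha P^+(\bar\alpha\varphi\theta h)=M_\alpha T_{\bar\alpha\varphi}(\theta h)$, and since $\theta h=f$ this is $M_\alpha T_{\bar\alpha\varphi}M_{\bar\theta}f$ written the two equivalent ways. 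For (2): with the same $f=\theta h$, $\hat\Gamma^\theta_\varphi f=P^-(\varphi\theta h)=H_{\varphi\theta}h=H_{\varphi\theta}M_{\bar\theta}f$, directly from the definition $H_\psi g=P^-(\psi g)$. For (3): for $f\in H_-^\infty$, $Jf\in H^\infty$; using $J^2=I$ and $JM_\varphi J=M_{\bar\varphi}$ one computes $JT_{\bar\varphi}Jf=JP^+(\bar\varphi\, Jf)=JP^+ J\,(JM_{\bar\varphi}J)f$; since $JP^+J=P^-$ (because $J$ swaps $H^2$ and $H_-^2$ and is antilinear isometric) and $JM_{\bar\varphi}Jf=\varphi f$, this equals $P^-(\varphi f)=\check T_\varphi f$. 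For (4): take the adjoint viewpoint — $\check\Gamma^\alpha_\varphi=P_{\alpha H^2}M_{\varphi|H_-^\infty}$, and one checks on a dense set that its (pre)adjoint is $\hat\Gamma^\alpha_{\bar\varphi}$, or more directly factor $P_{\alpha H^2}=M_\alpha P^+M_{\bar\alpha}$ as in (1) to get $\check\Gamma^\alpha_\varphi f=M_\alpha P^+(\bar\alpha\varphi f)$ for $f\in H_-^\infty$; since $\bar\alpha\varphi f$ has both analytic and co-analytic parts, $P^+(\bar\alpha\varphi f)=H_{\alpha\bar\varphi}^* f$ where $H^*_\psi=P^+M_{\bar\psi|H_-^2}$ is the Hankel adjoint, giving $M_\alpha H^*_{\alpha\bar\varphi}f$.

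The only genuinely delicate point, which I would state as a preliminary remark rather than belabor, is the identity $P_{\alpha H^2}=M_\alpha P^+ M_{\bar\alpha}$ on $L^2$: one must check that $M_\alpha P^+ M_{\bar\alpha}$ is the orthogonal projection onto $\alpha H^2$, i.e. that it is idempotent, self-adjoint, and has range exactly $\alpha H^2$. Idempotency follows from $M_{\bar\alpha}M_\alpha=I$ and $(P^+)^2=P^+$; self-adjointness from $M_\alpha^*=M_{\bar\alpha}$ and $(P^+)^*=P^+$; the range identity from $P^+H^2=H^2$ and $M_\alpha H^2=\alpha H^2$. Everything else is bookkeeping with the relations $M_\theta M_{\bar\theta}=I$, $JM_\varphi J=M_{\bar\varphi}$, $JP^+J=P^-$, and the definitions of $T_\varphi$, $H_\varphi$, $H^*_\varphi$; I would simply record the four computations in parallel, noting that the passage from $\varphi\in L^\infty$ to $\varphi\in L^2$ is automatic since on the dense subspaces $\theta H^\infty$ and $H_-^\infty$ all the operators on the right-hand sides are already defined for $L^2$-symbols. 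I expect no real obstacle — the statement is explicitly flagged as ``not difficult to verify'' — so the main care is just to make sure the projection-factorization lemma is stated cleanly before it is used.
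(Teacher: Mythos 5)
Your proof is correct and follows exactly the route the paper intends: the paper omits the verification (citing \cite[Proposition 20]{CKLPcom}), and the identities you use --- $P_{\alpha H^2}=M_\alpha P^+M_{\bar\alpha}$, $JP^+J=P^-$, $JM_{\bar\varphi}J=M_{\varphi}$, and $H_\psi^*u=P^+(\bar\psi u)$ --- are precisely the toolkit the authors deploy in the neighbouring proof of Proposition \ref{cor5.3}. No gaps; the restriction to $\theta H^\infty$ and $H^\infty_-$ handles the $L^2$-symbol issue just as you say.
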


It now follows from Proposition \ref{propc2}(1) that $\hat{T}\in \mathcal{T}(\theta H^2,\alpha H^2)$ if and only if $ M_{\bar\alpha} \hat{T} M_{\theta|H^2}\in \mathcal{T}(H^2)$ and so $\mathcal{T}({\theta H^2},\alpha H^2)$ is isomorphic to the space of classical Toeplitz operators $\mathcal{T}({H^2})$. Moreover, each  $\hat{T}^{\theta,\alpha}_\varphi$ is uniquely determined by its symbol and extends to a bounded operator on $\theta H^2$ if and only if $\varphi\in L^\infty$ (since classical Toeplitz operators have such properties). Similarly, $\mathcal{T}({H^2_-})$ is isomorphic to $\mathcal{T}({H^2})$, each  $\check{T}_\varphi$ is uniquely determined by its symbol and extends to a bounded operator on $H^2_-$ if and only if $\varphi\in L^\infty$.

On the other hand, both $\mathcal{T}(\theta H^2,H^2_-)$ and $\mathcal{T}(H^2_-,\alpha H^2)$ are isomorphic to the space of all Hankel operators $\mathcal{H}(H^2,H^2_-)$. It follows from the properties of classical Hankel operators that $\hat{\Gamma}_{\varphi}^{\theta}$ and $\check{\Gamma}_{\varphi}^{\alpha}$ may be bounded even for $\varphi\notin L^\infty$, and
\begin{equation}
\label{zerro}
\hat{\Gamma}_{\varphi}^{\theta}=\hat{\Gamma}_{\psi}^{\theta}\quad\text{if and only if}\quad (\varphi-\psi)\perp \overline{\theta z H^2}
\end{equation}
and
\begin{equation*}
%\label{zerro}
\check{\Gamma}_{\varphi}^{\alpha}=\check{\Gamma}_{\psi}^{\alpha}\quad\text{if and only if}\quad (\varphi-\psi)\perp {\alpha z H^2}.
\end{equation*}
In particular, $\hat{\Gamma}^\theta_\varphi=0$ if $\varphi\in\bar\theta H^\infty\supset H^\infty$ and
$\check{\Gamma}^\alpha_\varphi=0$ if $\varphi\in\alpha\overline{ H^\infty}\supset \overline{H^\infty}$. However, since each bounded Hankel operator has a symbol from $L^\infty$ \cite[Theorem 1.3, Chapter 1]{VVP}, we see that the same is true for operators from $\mathcal{T}(\theta H^2,H^2_-)$ or $\mathcal{T}(H^2_-,\alpha H^2)$. Thus operators with bounded symbols form the spaces $\mathcal{T}(\theta H^2,H^2_-)$ and $\mathcal{T}(H^2_-,\alpha H^2)$.

Observe also that %by Proposition \ref{propc2}(1)(3),
for $\varphi_1,\varphi_2\in L^{\infty}$,
 $$\hat{T}^\theta_{\varphi_1}\hat{T}^\theta_{\varphi_2}=M_\theta T_{\varphi_1} T_{\varphi_2} M_{\bar\theta|\theta H^2}\quad\text{and}\quad\check{T}_{\varphi_1}\check{T}_{\varphi_2}=JT_{\bar\varphi_1} T_{\bar\varphi_2}  J.$$
 It thus follows from the properties of classical Toeplitz operators that if one of the functions $\varphi_1$, $\varphi_2$ belongs to $H^{\infty}$, then
\begin{equation}\label{plus}
 \hat{T}^\theta_{\bar\varphi_1}\hat{T}^\theta_{\varphi_2}=\hat{T}^\theta_{\bar\varphi_1\varphi_2}\quad\text{and}\quad\check{T}_{\varphi_1}\check{T}_{\bar\varphi_2}=\check{T}_{\varphi_1\bar\varphi_2}.
 \end{equation}

It is well known that classical Toeplitz and Hankel operators can be characterized in terms of compressions of $M_z$ to $H^2$ and $H^2_-$ (for more details see  \cite[Theorem 4.16]{GMR} and \cite[Theorem 1.8, Chapter 1]{VVP}). As a consequence of Proposition \ref{propc2} we get the following.

\begin{theorem}\label{thm5.7}Let $\theta$ and $\alpha$ be two nonconstant inner functions.
	\begin{enumerate}%[{(a)}]
		\item Let $\hat{T}\in \mathcal{B}(\theta H^2,\alpha H^2)$. Then $\hat{T}\in \mathcal{T}(\theta H^2,\alpha H^2)$ if and only if $ \hat{T}=\hat{T}^\alpha_{\bar z}\hat{T}\hat{T}^{\theta}_z$, and in that case $\hat{T}=\hat{T}^{\theta,\alpha}_\varphi$ with $\varphi=\bar\theta\hat{T}(\theta)+\alpha \overline{\hat{T}^*(\alpha)}-\alpha\overline{\langle \hat{T}^*(\alpha),\theta\rangle\theta}\in L^\infty$.
		\item Let $\check T\in \mathcal{B}(H^2_-)$. Then $\check T \in \mathcal{T}(H^2_-)$ if and only if $\check T=\check T_{ z}\check T \check T_{\bar z}$, and in that case $\check T=\check T_\varphi$ with $\varphi=z\check T \bar z+\bar z \overline{\check T^*\bar z}-\langle \check T\bar z, \bar z\rangle\in L^\infty$.
		\item
		Let $\hat\Gamma\in \mathcal{B}(\theta H^2,H^2_-)$. Then $\hat\Gamma\in \mathcal{T}(\theta H^2,H^2_-)$ if and only if $ \check T_z \hat\Gamma=\hat\Gamma \hat T^\theta_z$, and in that case $\hat\Gamma=\hat\Gamma^\theta_\varphi$ with $P^-(\theta\varphi)=\hat\Gamma\theta$. Moreover, there exists such $\varphi$, which belongs to $L^\infty$.
		\item Let  $\check\Gamma \in \mathcal{B}(H^2_-,\alpha H^2)$. Then $\check\Gamma \in \mathcal{T}(H^2_-,\alpha H^2)$ if and only if $ \check\Gamma \check T_{\bar z}=\hat T_{\bar z}^{\alpha}\check\Gamma$, and in that case $\check\Gamma=\check\Gamma^\alpha_\varphi$ with $P^-(\alpha\bar\varphi)=\check\Gamma^*\alpha$. Moreover, there exists such a $\varphi$, which belongs to $L^\infty$.
	\end{enumerate}
\end{theorem}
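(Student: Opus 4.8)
The plan is to reduce all four statements to classical characterizations --- the Brown--Halmos theorem for items (1) and (2), and the classical intertwining criterion for Hankel operators together with Nehari's theorem for items (3) and (4) --- by transporting everything along the (anti-)unitaries that already underlie Proposition \ref{propc2}. I would write $U_\theta=M_{\theta|H^2}\colon H^2\to\theta H^2$ and $U_\alpha=M_{\alpha|H^2}\colon H^2\to\alpha H^2$ for the unitary multiplication operators, and recall that $J$ restricts to an anti-unitary of $H^2_-$ onto $H^2$ with $J1=\bar z$ and $J^\sharp=J$. The first step is to read off from Proposition \ref{propc2}(1),(3), applied with $\varphi=z$ and $\varphi=\bar z$, the identities
\[
\hat T^\theta_z=U_\theta T_z U_\theta^*,\qquad \hat T^\alpha_{\bar z}=U_\alpha T_{\bar z}U_\alpha^*,\qquad \check T_z=JT_{\bar z}J,\qquad \check T_{\bar z}=JT_zJ,
\]
and to observe that the maps $\hat T\mapsto U_\alpha^*\hat TU_\theta$, $\check T\mapsto J\check TJ$, $\hat\Gamma\mapsto\hat\Gamma U_\theta$ and $\check\Gamma\mapsto U_\alpha^*\check\Gamma$ send the four classes $\mathcal T(\theta H^2,\alpha H^2)$, $\mathcal T(H^2_-)$, $\mathcal T(\theta H^2,H^2_-)$, $\mathcal T(H^2_-,\alpha H^2)$ (again by Proposition \ref{propc2}) bijectively onto, respectively, $\mathcal T(H^2)$, $\mathcal T(H^2)$, the Hankel operators $\mathcal{H}(H^2,H^2_-)$, and the adjoints of the operators in $\mathcal{H}(H^2,H^2_-)$. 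Since these maps are bounded bijections, membership in a class and the existence of a bounded symbol of the prescribed form both transfer verbatim between the two sides.

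For (1) I would put $T=U_\alpha^*\hat TU_\theta$. Using $U_\alpha^*\hat T^\alpha_{\bar z}=T_{\bar z}U_\alpha^*$ and $\hat T^\theta_zU_\theta=U_\theta T_z$, the equation $\hat T=\hat T^\alpha_{\bar z}\hat T\hat T^\theta_z$ becomes $T=T_{\bar z}TT_z$, which by the Brown--Halmos theorem \cite{BH} (see also \cite[Theorem 4.16]{GMR}) holds exactly when $T=T_\psi$ for a unique $\psi\in L^\infty$; then $\hat T=U_\alpha T_\psi U_\theta^*=\hat T^{\theta,\alpha}_{\alpha\bar\theta\psi}\in\mathcal T(\theta H^2,\alpha H^2)$ by Proposition \ref{propc2}(1). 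For the symbol I would recover $\psi$ from $T$ via the standard identity $\psi=T1+\overline{T^*1}-\langle T1,1\rangle$, compute $T1=\bar\alpha\hat T(\theta)$, $T^*1=\bar\theta\hat T^*(\alpha)$ and $\langle T1,1\rangle=\overline{\langle\hat T^*(\alpha),\theta\rangle}$, and multiply by $\alpha\bar\theta$ to obtain the stated $\varphi$. Item (2) is the same argument with $J$ in the role of $U_\theta$ and $U_\alpha$: with $T=J\check TJ$ the relation $\check T=\check T_z\check T\check T_{\bar z}$ turns into $T=T_{\bar z}TT_z$; the transport sends $\check T_\varphi$ to $T_{\bar\varphi}$, so Brown--Halmos gives $\check T=\check T_\varphi$ with $\bar\varphi=\psi$, and conjugating $\varphi=\overline{T1}+T^*1-\overline{\langle T1,1\rangle}$ and using $J1=\bar z$ yields $\varphi=z\check T\bar z+\bar z\overline{\check T^*\bar z}-\langle\check T\bar z,\bar z\rangle$.

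For (3) I would set $\Gamma=\hat\Gamma U_\theta\colon H^2\to H^2_-$ and multiply $\check T_z\hat\Gamma=\hat\Gamma\hat T^\theta_z$ on the right by $U_\theta$; since $\hat T^\theta_zU_\theta=U_\theta T_z$ this is equivalent to $\check T_z\Gamma=\Gamma T_z$, which is exactly the classical characterization of Hankel operators \cite[Theorem 1.8, Chapter 1]{VVP}, so $\Gamma=H_\psi$, and by Nehari's theorem \cite[Theorem 1.3, Chapter 1]{VVP} the symbol $\psi$ can be taken in $L^\infty$; then $\hat\Gamma=H_\psi U_\theta^*=\hat\Gamma^\theta_{\bar\theta\psi}$. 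The symbol condition comes from the remark that $\hat\Gamma=\hat\Gamma^\theta_\varphi$ iff $\hat\Gamma U_\theta=H_{\varphi\theta}$ iff $P^-(\varphi\theta)=H_{\varphi\theta}1=\hat\Gamma\theta$. Item (4) I would obtain by passing to adjoints: $U_\alpha^*\check\Gamma$ is the adjoint of a Hankel operator iff $T_{\bar z}(U_\alpha^*\check\Gamma)=(U_\alpha^*\check\Gamma)\check T_{\bar z}$, which, transported through $U_\alpha$ via $U_\alpha^*\hat T^\alpha_{\bar z}=T_{\bar z}U_\alpha^*$, becomes $\check\Gamma\check T_{\bar z}=\hat T^\alpha_{\bar z}\check\Gamma$; and $\check\Gamma=\check\Gamma^\alpha_\varphi$ iff $\check\Gamma^*\alpha=H_{\alpha\bar\varphi}1=P^-(\alpha\bar\varphi)$, with an $L^\infty$ symbol again furnished by Nehari.

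I do not anticipate a real obstacle: the mathematical content is a clean transfer of already known facts. The two points that will need attention are bookkeeping rather than substance. First, the identities in Proposition \ref{propc2} are asserted only on the dense subspaces $\theta H^\infty$ and $H^\infty_-$, so one must check that the operators involved are bounded and extend the identities by continuity before using them. Second, the explicit symbol formulas require careful handling of the conjugations --- for example recognizing that $\overline{\langle\hat T^*(\alpha),\theta\rangle\theta}=\bar\theta\,\overline{\langle\hat T^*(\alpha),\theta\rangle}$ in (1), and tracking the analogous $J$-conjugations in (2) --- but none of this is delicate.
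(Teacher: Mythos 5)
Your proof is correct. The paper itself offers no argument for this theorem --- it simply defers to \cite{CKLPcom} --- but the route you take (conjugating by $M_\theta$, $M_\alpha$ and $J$ to reduce (1)--(2) to the Brown--Halmos criterion and (3)--(4) to the classical Hankel intertwining relation plus Nehari's theorem, then pushing the standard symbol-recovery formulas back through these maps) is exactly the transfer that Proposition \ref{propc2} and the discussion following it are set up to enable, and it is the same strategy as in the cited reference.
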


\begin{proof}
	For the proof see \cite{CKLPcom}.
\end{proof}

We will now consider operators of the form
	\begin{equation*}
%\label{tmat}
D=\begin{bmatrix}\hat{T}_{\varphi_1}^{\theta,\alpha}&\check{\Gamma}_{\varphi_2}^{\alpha}
\\\hat{\Gamma}_{\varphi_3}^{\theta}&\check{T}_{\varphi_4}\end{bmatrix}
\end{equation*}
with $\varphi_i\in L^2$ for $i=1,2,3,4$. Note that if $D$ given above is bounded, then $\hat{T}_{\varphi_1}^{\theta,\alpha}$ and $\check{T}_{\varphi_4}$ are also bounded and so, as mentioned above, necessarily $\varphi_1,\varphi_4\in L^\infty$. On the other hand, even though for bounded $D$ the compressions $\check{\Gamma}_{\varphi_2}^{\alpha}$ and $\hat{\Gamma}_{\varphi_3}^{\theta}$ are also bounded, the functions $\varphi_2$ and $\varphi_3$ may not belong to $L^\infty$ (but there exist $\psi_2,\psi_3\in L^\infty$ such that $\check{\Gamma}_{\varphi_2}^{\alpha}=\check{\Gamma}_{\psi_2}^{\alpha}$ and $\hat{\Gamma}_{\varphi_3}^{\theta}=\hat{\Gamma}_{\psi_3}^{\theta}$).

We will now study relations of the operators \eqref{hat} and \eqref{czek} with respect to the conjugation $C_\theta$ (see \eqref{ctheta}). Recall that $C_\theta$ can be expressed as $C_\theta= M_\theta J=J M_{\bar\theta}$, hence we obtain the following.
\begin{proposition}\label{cor5.3}
For $\varphi\in L^\infty$,
\begin{enumerate}%[{(i)}]
  \item $\hat{T}^{\theta,\alpha}_\varphi=C_\alpha \check{T}_{\alpha\bar\varphi\bar{\theta}} C_{\theta|\theta H^2}=C_\alpha \check{T}_{\alpha}\check{T}_{\bar\varphi} \check{T}_{\bar{\theta}}C_{\theta|\theta H^2}$;
  %\item $\check{T}_{\varphi}=(C_{\alpha}M_{\alpha}T_{\bar\theta}M_{\bar\alpha})\hat{T}^{\theta,\alpha}_{\bar\varphi}(M_{\theta}T_{\alpha}M_{\bar\theta}C_{\theta})_{|H^2_-}$;
  \item $\check{T}_{\varphi}=(P^-C_{\alpha}M_{\bar\theta})_{|\theta H^2}\hat{T}^{\alpha,\theta}_{\bar{\varphi}}
(M_{\theta}C_{\alpha})_{|H^2_-}$;%(P^-C_{\theta}M_{\bar\alpha})_{|\alpha H^2}\hat{T}^{\theta,\alpha}_{\bar\varphi}(M_{\alpha}C_{\theta})_{|H^2_-}$;
    \item $\hat{\Gamma}^\theta_\varphi= C_\theta \check{\Gamma}^\theta_{\bar\varphi} C_{\theta|\theta H^2}$.
\end{enumerate}
\end{proposition}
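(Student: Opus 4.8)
The plan is to reduce all three identities to elementary algebra involving the flip $J$, multiplication operators $M_\chi$, and the orthogonal projections, by writing $C_\theta=M_\theta J$ and $C_\alpha=M_\alpha J$ throughout. First I would record the auxiliary identities to be used repeatedly: from $Jf=\bar z\bar f$ one has $JM_\chi J=M_{\bar\chi}$ for any $\chi$ and $JP^+J=P^-$, $JP^-J=P^+$; since $M_\theta$ maps $H^2$ isometrically onto $\theta H^2$ one has $P_{\theta H^2}=M_\theta P^+M_{\bar\theta}$ (and the analogue with $\alpha$); combining these gives $C_\alpha P^-C_\alpha=M_\alpha(JP^-J)M_{\bar\alpha}=M_\alpha P^+M_{\bar\alpha}=P_{\alpha H^2}$ and, likewise, $C_\theta P_{\theta H^2}C_\theta=P^-$. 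Finally I would note the transfer identity $C_\alpha M_\psi C_\theta=M_{\alpha\bar\psi\bar\theta}$, which follows at once from $C_\alpha=M_\alpha J$, $C_\theta=M_\theta J$ and $JM_\chi J=M_{\bar\chi}$, while $C_\theta M_\varphi C_\theta=M_{\bar\varphi}$ is \eqref{31}. Since $\varphi\in L^\infty$, every operator occurring in the statement is bounded and defined on the whole relevant subspace, so no domain subtleties arise; one only has to keep track of which conjugation is attached to which subspace.

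For part (1), insert $C_\alpha C_\alpha=I$ and use the transfer identity:
\[
C_\alpha\check T_{\alpha\bar\varphi\bar\theta}C_\theta=(C_\alpha P^-C_\alpha)(C_\alpha M_{\alpha\bar\varphi\bar\theta}C_\theta)=P_{\alpha H^2}M_\varphi ,
\]
where in the last step the transfer identity gives $C_\alpha M_{\alpha\bar\varphi\bar\theta}C_\theta=M_{\alpha\cdot\bar\alpha\varphi\theta\cdot\bar\theta}=M_\varphi$; restricting to $\theta H^2$ then yields $\hat T^{\theta,\alpha}_\varphi$. The second equality $\check T_{\alpha\bar\varphi\bar\theta}=\check T_\alpha\check T_{\bar\varphi}\check T_{\bar\theta}$ is obtained by applying \eqref{plus} twice: since $\alpha\in H^\infty$ we get $\check T_\alpha\check T_{\bar\varphi}=\check T_{\alpha\bar\varphi}$, and since $\theta\in H^\infty$ we get $\check T_{\alpha\bar\varphi}\check T_{\bar\theta}=\check T_{\alpha\bar\varphi\bar\theta}$.

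Part (3) follows the same pattern and is immediate: $C_\theta\check\Gamma^\theta_{\bar\varphi}C_\theta=(C_\theta P_{\theta H^2}C_\theta)(C_\theta M_{\bar\varphi}C_\theta)=P^-M_\varphi$, and restricting to $\theta H^2$ gives $\hat\Gamma^\theta_\varphi$. For part (2) I would first collapse the middle of the composition. Using $\hat T^{\alpha,\theta}_{\bar\varphi}=P_{\theta H^2}M_{\bar\varphi}$ on $\alpha H^2$ together with $M_{\bar\theta}P_{\theta H^2}=M_{\bar\theta}M_\theta P^+M_{\bar\theta}=P^+M_{\bar\theta}$ and $M_{\bar\theta}M_{\bar\varphi}M_\theta=M_{\bar\varphi}$, the composition $(P^-C_\alpha M_{\bar\theta})\,\hat T^{\alpha,\theta}_{\bar\varphi}\,(M_\theta C_\alpha)$ restricted to $H^2_-$ equals $P^-C_\alpha P^+M_{\bar\varphi}C_\alpha$. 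Now $C_\alpha P^+=C_\alpha(I-P^-)=(I-P_{\alpha H^2})C_\alpha$, so this is $P^-(I-P_{\alpha H^2})C_\alpha M_{\bar\varphi}C_\alpha=P^-(I-P_{\alpha H^2})M_\varphi=P^-M_\varphi$, because $\operatorname{ran}P_{\alpha H^2}=\alpha H^2\subseteq H^2$ is orthogonal to $H^2_-$; restricting to $H^2_-$ gives $\check T_\varphi$.

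I do not expect a genuine obstacle here: the entire content is the conjugation/projection bookkeeping above. The one place where care is really required is that one must use $C_\alpha P^-C_\alpha=P_{\alpha H^2}$ — with the projection onto the \emph{left-hand} model space — rather than $C_\theta P^-C_\theta=P_{\theta H^2}$; mixing these up would spoil the cancellation of $\alpha$ and $\theta$ in the symbols, and it is the only step that is easy to get wrong.
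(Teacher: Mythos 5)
Your proof is correct. It uses the same toolbox as the paper --- writing $C_\theta=M_\theta J$, $C_\alpha=M_\alpha J$ and exploiting $JM_\chi J=M_{\bar\chi}$, $JP^{\pm}J=P^{\mp}$ --- but it is organized differently: you simplify the right-hand sides directly via the projection identities $C_\alpha P^-C_\alpha=P_{\alpha H^2}$, $C_\theta P_{\theta H^2}C_\theta=P^-$ and $M_{\bar\theta}P_{\theta H^2}=P^+M_{\bar\theta}$, whereas the paper builds the identities up from Proposition \ref{propc2} (e.g.\ $\hat T^{\theta,\alpha}_\varphi=M_\alpha T_{\bar\alpha\varphi\theta}M_{\bar\theta|\theta H^2}$, and, for item (2), the classical factorization $T_{\bar\varphi}=T_{\bar\theta}T_{\bar\alpha\bar\varphi\theta}T_\alpha$), and it cites \cite{CKLPcom} for item (3) instead of proving it. Your route buys two small things. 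First, item (3) gets a self-contained two-line proof. Second, for item (2) your direct verification matches the statement exactly as printed, with $\hat T^{\alpha,\theta}_{\bar\varphi}$ in the middle and the left factor restricted to $\theta H^2$; the paper's chain of equalities instead arrives at $\hat T^{\theta,\alpha}_{\bar\varphi}$ (whose output lies in $\alpha H^2$), which is also a valid identity but does not literally coincide with the displayed formula, so your argument removes a point where the reader has to reconcile indices. All the individual steps check out, including the two applications of \eqref{plus} needed for the second equality in item (1) and the final cancellation $P^-P_{\alpha H^2}=0$ in item (2).
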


\begin{proof}
	The proof of (3) can be found in \cite{CKLPcom}. A slight modification of the proof of \cite[Proposition 22]{CKLPcom} (for $\alpha=\theta$) gives
\begin{displaymath}
\begin{split}
	\hat{T}^{\theta,\alpha}_\varphi&=M_\alpha T_{\bar{\alpha}\varphi\theta} M_{\bar\theta|\theta H^2}=M_\alpha (JP^-J)M_{\bar{\alpha}\varphi\theta} J (JM_{\bar\theta})_{|\theta H^2}\\
	&=(M_\alpha J) P^-M_{{\alpha}\bar\varphi\bar\theta}P^-JM_{\bar\theta|\theta H^2}=C_\alpha\check{T}_{{\alpha}\bar\varphi\bar\theta} C_{\theta|\theta H^2}
	=C_\alpha \check{T}_{\alpha}\check{T}_{\bar\varphi} \check{T}_{\bar{\theta}}C_{\theta|\theta H^2},
\end{split}
\end{displaymath}
where the last equality follows from \eqref{plus}. Hence (1) holds.

To prove (2) recall that $JM_{\bar\theta}=M_{\theta}J$ and $JP^+=P^-J$, which implies that
$$JT_{\bar\theta}=P^-M_{\theta}J_{|H^2}\quad\text{and}\quad T_{\alpha}J_{|H^2_-}=JP^-M_{\bar{\alpha}|H^2_-}.$$
Since $J=C_{\alpha}M_{\alpha}=M_{\bar\theta}C_{\theta}$, we get by Proposition \ref{propc2}:
\begin{align*}
\check{T}_{\varphi}&=J{T}_{\bar\varphi}J_{|H^2_-}= J T_{\bar \theta} T_{\bar\alpha\bar\varphi \theta}T_{\alpha}J_{|H^2_-}=P^- M_{\theta}JT_{\bar\alpha\bar\varphi \theta}JP^-M_{\bar\alpha|H^2_-}\\
&=P^- M_{\theta}C_{\alpha}M_{\alpha}T_{\bar\alpha\bar\varphi \theta}M_{\bar\theta}C_{\theta}P^-M_{\bar\alpha|H^2_-}=P^- M_{\theta}C_{\alpha}\hat{T}^{\theta,\alpha}_{\bar\varphi}C_{\theta}M_{\bar\alpha|H^2_-}.
\end{align*}
The result follows, since
$$M_{\theta}C_{\alpha}=
M_{\alpha}C_{\theta}=C_{\theta}M_{\bar\alpha}=C_{\alpha}M_{\bar\theta}.$$
\end{proof}

Note that for arbitrary $\varphi\in L^2$ the equalities (1)--(3) in Proposition \ref{cor5.3} hold on the set of bounded functions.

Finally, we are ready to give the following characterization of ADTTO's.

\begin{theorem}\label{th8.12}
	Let $\theta$ and $\alpha$ be inner functions and let $D\in  \mathcal{B}(K_\theta^\perp,K_\alpha^\perp)$.
 Then the operator $D$ is an asymmetric dual truncated Toeplitz operator,
	$D\in\mathcal{T}(K_\theta^\perp,K_\alpha^\perp)$, if and only if the following conditions hold:
	\begin{enumerate}
		\item $\dtha=\hat{T}^\alpha_{\bar{z}}\dtha\hat{T}^\theta_z$;
		\item $\dd=(P^-C_{\alpha}M_{\bar\theta})_{|\theta H^2}\left(\dtha\right)^*
			(M_{\theta}C_{\alpha})_{|H^2_-}$;
		\item $\dc\hat{T}_z^\theta=\check{T}_z\dc$ and $(\dba)^*\hat{T}_z^\alpha =\check{T}_z (\dba)^*$;
		\item $P^-(D(\theta))=P^-(\theta\alpha\overline{D^*(\alpha)})$ and $P^-(D^*(\alpha))=P^-(\theta\alpha\overline{D(\theta)})$.
				\end{enumerate}
In that case, $D=D_{\varphi}^{\theta,\alpha}$ with $\varphi\in L^{\infty}$ given by
\begin{equation}\label{sym}
\varphi\ =\ \bar{\theta}P_{\alpha H^2}(D(\theta))+\alpha\overline{P_{\theta H^2}(D^*(\alpha))}-\alpha\overline{\langle D^*(\alpha),\theta\rangle\theta}.
\end{equation}
\end{theorem}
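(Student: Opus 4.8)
The plan is to follow the scheme of \cite[Theorem~27]{CKLPcom}: handle the four matrix entries of $D$ (with respect to $K^\perp_\theta=\theta H^2\oplus H^2_-$ and $K^\perp_\alpha=\alpha H^2\oplus H^2_-$) one at a time, identifying each entry as an operator with an $L^\infty$ symbol by means of Theorem~\ref{thm5.7} and Proposition~\ref{cor5.3}, and then use condition~(4) to force all those symbols to be determined by one common $\varphi\in L^\infty$.

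For the necessity, suppose $D=D^{\theta,\alpha}_\varphi$ with $\varphi\in L^\infty$. Then, by the block decomposition recalled in Section~3, $\dtha=\hat T^{\theta,\alpha}_\varphi$, $\dd=\check T_\varphi$, $\dc=\hat\Gamma^\theta_\varphi$ and $\dba=\check\Gamma^\alpha_\varphi=(\hat\Gamma^\alpha_{\bar\varphi})^*$. Then condition~(1) is Theorem~\ref{thm5.7}(1); condition~(2) follows from Proposition~\ref{cor5.3}(2) together with $(\dtha)^*=(\hat T^{\theta,\alpha}_\varphi)^*=\hat T^{\alpha,\theta}_{\bar\varphi}$; condition~(3) follows by applying Theorem~\ref{thm5.7}(3) to $\hat\Gamma^\theta_\varphi$ and, with $\alpha$ in place of $\theta$, to $\hat\Gamma^\alpha_{\bar\varphi}$. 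For condition~(4) I would argue directly: since $\theta\in\theta H^2$ and $\alpha\in\alpha H^2$ we have $D(\theta)=P^\perp_\alpha(\varphi\theta)$ and $D^*(\alpha)=P^\perp_\theta(\bar\varphi\alpha)$, so $P^-(D(\theta))=P^-(\varphi\theta)$; writing $D^*(\alpha)=\bar\varphi\alpha-P_\theta(\bar\varphi\alpha)$ and using $\overline{K_\theta}\subset z\bar\theta H^2$ one gets $\theta\alpha\,\overline{D^*(\alpha)}-\theta\varphi=-\theta\alpha\,\overline{P_\theta(\bar\varphi\alpha)}\in\alpha zH^2\subset H^2$, hence $P^-(\theta\alpha\,\overline{D^*(\alpha)})=P^-(\theta\varphi)=P^-(D(\theta))$; the second identity in~(4) is obtained symmetrically, interchanging $\theta$ and $\alpha$.

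For the sufficiency, assume (1)--(4). First, condition~(1) and Theorem~\ref{thm5.7}(1) give $\dtha=\hat T^{\theta,\alpha}_\varphi$ for the symbol $\varphi\in L^\infty$ produced there with $\hat T=\dtha$; since $\dtha(\theta)=P_{\alpha H^2}(D(\theta))$, $(\dtha)^*(\alpha)=P_{\theta H^2}(D^*(\alpha))$ and $\langle(\dtha)^*(\alpha),\theta\rangle=\langle D^*(\alpha),\theta\rangle$, this $\varphi$ is exactly the function in~\eqref{sym}. It then remains to match the other three entries of $D$ with those of $D^{\theta,\alpha}_\varphi$. For the $(2,2)$ entry: by condition~(2) and $(\dtha)^*=\hat T^{\alpha,\theta}_{\bar\varphi}$, $\dd=(P^-C_\alpha M_{\bar\theta})_{|\theta H^2}\,\hat T^{\alpha,\theta}_{\bar\varphi}\,(M_\theta C_\alpha)_{|H^2_-}=\check T_\varphi$ by Proposition~\ref{cor5.3}(2). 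For the $(2,1)$ entry: condition~(3) and Theorem~\ref{thm5.7}(3) give $\dc=\hat\Gamma^\theta_\psi$ with $P^-(\theta\psi)=P^-(D(\theta))$ (the value of $\dc$ at $\theta$), so by \eqref{zerro} it suffices to prove $P^-(\theta\varphi)=P^-(D(\theta))$; but $\theta\varphi=P_{\alpha H^2}(D(\theta))+\theta\alpha\,\overline{P_{\theta H^2}(D^*(\alpha))}-\overline{\langle D^*(\alpha),\theta\rangle}\,\alpha$, whose first and third summands lie in $H^2$, and using $D^*(\alpha)=P_{\theta H^2}(D^*(\alpha))+P^-(D^*(\alpha))$ together with $\overline{H^2_-}=zH^2$ one gets $P^-(\theta\varphi)=P^-(\theta\alpha\,\overline{D^*(\alpha)})$, which equals $P^-(D(\theta))$ by condition~(4); hence $\dc=\hat\Gamma^\theta_\varphi$. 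For the $(1,2)$ entry I would argue the same way via the second identity of condition~(3): $(\dba)^*$ satisfies $\check T_z(\dba)^*=(\dba)^*\hat T^\alpha_z$, so by Theorem~\ref{thm5.7}(3) (with $\alpha$ in place of $\theta$) $(\dba)^*=\hat\Gamma^\alpha_\psi$ with $P^-(\alpha\psi)$ equal to the value of $(\dba)^*$ at $\alpha$, namely $P^-(D^*(\alpha))$; the analogous computation gives $P^-(\alpha\bar\varphi)=P^-(\theta\alpha\,\overline{D(\theta)})=P^-(D^*(\alpha))$ by condition~(4), so $(\dba)^*=\hat\Gamma^\alpha_{\bar\varphi}=(\check\Gamma^\alpha_\varphi)^*$, i.e.\ $\dba=\check\Gamma^\alpha_\varphi$. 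All four entries of $D$ then agree with those of $D^{\theta,\alpha}_\varphi$; since $\varphi\in L^\infty$, $D^{\theta,\alpha}_\varphi$ is bounded, so $D=D^{\theta,\alpha}_\varphi\in\mathcal T(K^\perp_\theta,K^\perp_\alpha)$.

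The delicate point will be the sufficiency direction: one must check that the single symbol $\varphi$ read off the $(1,1)$ entry through Theorem~\ref{thm5.7}(1) is \emph{simultaneously} the correct symbol for the $(2,1)$ and $(1,2)$ entries. This is exactly the role of condition~(4) — it is the compatibility relation equivalent to $P^-(\theta\varphi)=P^-(D(\theta))$ and $P^-(\alpha\bar\varphi)=P^-(D^*(\alpha))$ — and verifying it is the projection bookkeeping sketched above, which is routine but needs care, precisely because $\varphi$ itself is assembled from projections of $D(\theta)$ and $D^*(\alpha)$.
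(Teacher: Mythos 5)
Your proposal is correct and follows essentially the same route as the paper: identify each block of $D$ via Theorem \ref{thm5.7} and Proposition \ref{cor5.3}, then use condition (4) to force the off-diagonal Hankel-type blocks to share the symbol $\varphi$ read off the $(1,1)$ entry. The only difference is that you explicitly carry out the projection computations (the verification of (4) in the necessity direction and the matching $P^-(\theta\varphi)=P^-(D(\theta))$, $P^-(\alpha\bar\varphi)=P^-(D^*(\alpha))$ in the sufficiency direction) that the paper delegates to the proof of \cite[Theorem 7.1]{CKLPcom}, and these computations are accurate.
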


\begin{proof}%[Proof of Theorem \ref{th8.12}]
The proof is similar to the proof of \cite[Theorem 7.1]{CKLPcom}.
Assume firstly that $D=D_{\varphi}^{\theta,\alpha}$ with $\varphi\in L^{\infty}$. Then
$$D_\varphi^{\theta,\alpha}=\begin{bmatrix}\hat{T}^{\theta,\alpha}_\varphi &\check{\Gamma}_{\varphi}^\alpha\\\hat{\Gamma}_\varphi^\theta&\check{T}_\varphi\end{bmatrix}=
\begin{bmatrix}\hat{T}^{\theta,\alpha}_\varphi&(\hat{\Gamma}^\alpha_{\bar{\varphi}})^*\\\hat{\Gamma}^\theta_\varphi&\check{T}_\varphi\end{bmatrix}.$$
Note that (1) follows from Theorem \ref{thm5.7}(1). Moreover, (2) is satisfied by Proposition \ref{cor5.3}(2) and (3) is satisfied by Theorem \ref{thm5.7}. Similarly as in the proof of  \cite[Theorem 7.1]{CKLPcom}, it is straightforward to verify that $D=D_{\varphi}^{\theta,\alpha}$ satisfies (4).

Assume now that $D=\begin{bmatrix}\dtha&\dba\\\dc&\dd\end{bmatrix}\in  \mathcal{B}(K_\theta^\perp,K_\alpha^\perp)$ satisfies (1)--(4). It then follows from condition (1) and Theorem \ref{thm5.7}(1) that $\dtha=\hat{T}_{\varphi}^{\theta,\alpha}$ for $\varphi\in L^{\infty}$ given by
\begin{equation}
\label{xr}
\varphi={\bar{\theta}}\left(\dtha\right)(\theta)+{\alpha}\overline{\left(\dtha\right)^*(\alpha)}-\alpha\overline{\langle \left(\dtha\right)^*(\alpha),\theta\rangle\theta}.
\end{equation}
By (2) and Proposition \ref{cor5.3}(2),
\begin{align*}
\dd&=(P^-C_{\alpha}M_{\bar\theta})_{|\theta H^2}\left(\dtha\right)^*
(M_{\theta}C_{\alpha})_{|H^2_-}\\
&=(P^-C_{\alpha}M_{\bar\theta})_{|\theta H^2}\hat{T}^{\alpha,\theta}_{\bar{\varphi}}
(M_{\theta}C_{\alpha})_{|H^2_-}=\check{T}_{{\varphi}}.
\end{align*}
Condition (3) and Theorem \ref{thm5.7}(3)--(4) imply that $\dc=\hat{\Gamma}_{\psi_1}^{\theta}$ and $\dba=\check{\Gamma}_{\psi_2}^{\alpha}$ for some $\psi_1,\psi_2\in L^\infty$. % $\dtha$$\dba$$\dc$$\dd$
Using condition (4) one can show that $\hat{\Gamma}_{\psi_1}^{\theta}=\hat{\Gamma}_{\varphi}^{\theta}$ and $\check{\Gamma}_{\psi_2}^{\alpha}=\check{\Gamma}_{\varphi}^{\alpha}$, hence $D=D_{\varphi}^{\theta,\alpha}$ (see
the proof of \cite[Theorem 7.1]{CKLPcom} for details).
Moreover, \eqref{sym} follows from \eqref{xr}.

\end{proof}

\begin{remark}  By \eqref{sym}, the symbol $\varphi\in L^\infty$ of an asymmetric dual truncated Toeplitz operator $D$ can be obtained by calculating $D(\theta)$ and $D^*(\alpha)$. As in \cite[Remark 28]{CKLPcom}, $\varphi$ can also be calculated using $D(\bar z)$ and $D^*(\bar z)$. To see this let $\varphi=\varphi^-+\varphi^+$, $\varphi^-\in H^2_-$, $\varphi^+\in H^2$ and let $\hat\varphi(0)$ denote the $0$--th Fourier coefficient  of $\varphi$. Then, by the fact that $C_{\alpha}D^{\theta,\alpha}_{\varphi} C_{\theta|K_{\theta}^\perp}=D^{\theta,\alpha}_{\alpha\bar\varphi\bar{\theta}}$, we have
	\begin{enumerate}
		\item $\hat\varphi (0)=\langle 1, \bar\varphi\rangle=\langle \alpha,  D^{\theta,\alpha}_{\alpha\bar\varphi\bar{\theta}}\theta\rangle=\langle \alpha,  C_{\alpha}D^{\theta,\alpha}_{\varphi} C_{\theta}\theta\rangle=\langle D \bar z,\bar z\rangle$,
		\item $\varphi^+= P^+\bar\theta D^{\alpha,\theta}_{\bar\alpha\varphi\theta}\alpha=P^+\bar\theta C_{\theta}D^{\alpha,\theta}_{\bar\varphi}C_{\alpha}\alpha=J P^-(D^*\bar z)$,
		\item $
		\varphi^- =\overline{P^+ ({\bar\alpha}  D^{\theta,\alpha}_{\alpha\bar\varphi\bar\theta}\theta) }-\hat\varphi(0)=\overline{P^+ ({\bar\alpha}  C_{\alpha}D^{\theta,\alpha}_{\varphi}C_{\theta}\theta) }-\hat\varphi(0)\\\phantom{ \varphi^-}=\overline{JP^- ( D(\bar z))}-\hat\varphi(0)  = P^-(zD (\bar z)).
		$
	\end{enumerate}
Hence
\begin{equation}\label{sym2}
\varphi=P^-(zD (\bar z))+J P^-(D^*\bar z)\in H^2_-+H^2.
\end{equation}
Note that the decomposition \eqref{sym2} is orthogonal while \eqref{sym} in general is not.\end{remark}

\vspace{0.2cm}

Let $\alpha, \theta$ be nonconstant inner functions and denote by $\mathcal{A}(K_\theta^\perp, K_\alpha^\perp)$ the set of all asymmetric dual truncated Toeplitz operators with an analytic symbol, i.e.,
$$\mathcal{A}(K_\theta^\perp, K_\alpha^\perp)=\{D^{\theta,\alpha}_\varphi\colon \varphi\in H^\infty \}.$$
\begin{remark}\label{danal}
 Let $D\in \mathcal{B}(K_\theta^\perp, K_\alpha^\perp)$. Then $D\in \mathcal{A}(K_\theta^\perp, K_\alpha^\perp)$ if and only if $D$ satisfies the conditions of Theorem \ref{th8.12} and $P^- (zD(\bar z))=0$. The last condition means that $D(\bar z)\perp \bar z H^2_-$.\end{remark}

\section{Shift invariance of ATTO and ADTTO}
The property of being shift invariant was firstly considered in \cite{sar1} where operators on $\kdt$  were considered and
 it was shown that an operator $A\in \mathcal{B}(K_\theta)$ is a truncated Toeplitz operator if and only if $A$ is  shift invariant.
Let $\alpha$, $\theta$ be nonconstant inner functions.
Shift invariance for the asymmetric case $A\in\mathcal{B}(\kdt,\kda)$ was firstly considered in \cite{CKP}, where it was assumed that $\alpha\leqslant\theta $. Later the general case was proved in \cite{BM}. Namely:
\begin{theorem}[\cite{BM}]\label{sh-inv}
Let $\alpha, \theta$ be nonconstant inner functions and let $A\colon \kdt\to\kda$ be a bounded linear operator.  Then $A\in \mathcal{T}(K_\theta, K_\alpha)$ if and only if $A$ is shift invariant.
\end{theorem}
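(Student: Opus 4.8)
The plan is to prove both implications using the characterization of $\mathcal{T}(K_\theta,K_\alpha)$ via the identity $A = A^{\alpha}_{\bar z}\, A\, A^{\theta}_z$ together with the matrix calculus developed for $D^{\theta,\alpha}_\varphi$, translated to the compression $A^{\theta,\alpha}_\varphi = P_\alpha M_{\varphi|K_\theta}$. Concretely, one records that shift invariance of $A$, as in Definition \ref{dfshinv}, is equivalent to a relation between $A$ and compressions of $M_z$: if $f, zf \in K_\theta$ and $g, zg \in K_\alpha$, then $zf = A^\theta_z f$ (since $P_\theta(zf) = zf$ when $zf \in K_\theta$) and likewise $zg = A^\alpha_z g$, so \eqref{shinvt} reads $\langle A A^\theta_z f, g\rangle = \langle A f, A^\alpha_{\bar z} g\rangle$ on the dense set of such vectors. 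The first step is therefore to make precise the dense sets involved: by the Beurling description $K_\theta = H^2 \ominus \theta H^2$, the linear span of $\{z^n \ktz : n \geq 0\} \cap \{\text{vectors whose $z$-shift stays in } K_\theta\}$, or more simply the polynomials lying in $K_\theta^\infty$ together with their shifts, is dense; one uses that $K_\theta^\infty$ is dense in $K_\theta$ (quoted from \cite{GMR}) and that $S^* = T_{\bar z}$ acts boundedly, so $\bar z K_\theta \subset K_\theta$ always, giving plenty of pairs $f, zf$.

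The forward direction ($A \in \mathcal{T}(K_\theta,K_\alpha) \Rightarrow$ shift invariant) is the routine half: for $A = A^{\theta,\alpha}_\varphi$ with $\varphi \in L^2$ and $f, zf \in K_\theta^\infty$, $g, zg \in K_\alpha^\infty$, one computes directly
\[
\langle A(zf), zg\rangle = \langle P_\alpha(\varphi z f), z g\rangle = \langle \varphi z f, z g\rangle = \langle \varphi f, g\rangle = \langle A f, g\rangle,
\]
using that $zf, zg$ are honest $L^2$ functions and $|z| = 1$ on $\mathbb{T}$; the projection $P_\alpha$ can be dropped because $zg \in K_\alpha$. This requires only that $\varphi f \in L^2$, which holds on the dense set $K_\theta^\infty$, and then boundedness of $A$ extends \eqref{shinvt} to all admissible pairs by continuity.

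The converse is the substantive direction. Given a bounded shift invariant $A \colon K_\theta \to K_\alpha$, I would define $\varphi$ by the formula that must work, namely the analogue of \eqref{sym}, i.e.\ $\varphi = \bar\theta\, \widehat{A\ktz} + \alpha \overline{\widehat{A^*\kaz}} - (\text{correction term})$, built from the vectors $A\ktz$ and $A^*\kaz$ where $\ktz = 1 - \theta\overline{\theta(0)}$ is the reproducing kernel at $0$ (here I am transcribing the known symbol formula for ATTO's; any of the standard normalizations will do). Then I would show $A = A^{\theta,\alpha}_\varphi$ by checking equality of the two operators on a dense set. The mechanism: shift invariance \eqref{shinvt} lets one "slide" the $z$ across the inner product, so that $\langle A z^n f_0, z^m g_0\rangle$ depends only on $n - m$ for appropriate base vectors, which is exactly the Toeplitz-type structure; one then matches these "diagonal" values against the Fourier coefficients of $\varphi$. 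The cleanest route is probably to reduce to Theorem \ref{thm5.7}(1)-style arguments by noting $A$ shift invariant $\iff A = A^\alpha_{\bar z} A A^\theta_z$ (the operator reformulation of \eqref{shinvt} on the dense set), and then invoke the already-established isomorphism between these compressed-multiplication models and classical Toeplitz operators — essentially the same bookkeeping that underlies the proof of Theorem \ref{th8.12}, but one step simpler because there are no Hankel off-diagonal blocks.

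The main obstacle I anticipate is purely a density/domain issue rather than a conceptual one: verifying that the identity $\langle A(zf), zg\rangle = \langle Af, g\rangle$ on the dense set of pairs $(f, g)$ with $zf \in K_\theta$, $zg \in K_\alpha$ actually pins down $A$ uniquely and forces the Toeplitz form — one must be careful that the set $\{f \in K_\theta : zf \in K_\theta\}$, which equals $K_\theta \cap \bar\theta z H^2 = K_\theta \ominus \mathbb{C}\ktz$ composed appropriately, is large enough (it is: it has codimension one in $K_\theta$), and that iterating the shift builds up enough relations. This is where the argument of \cite{BM} presumably does its real work, and I would follow that reference for the delicate part; the citation "(see \cite{BM})" in the statement confirms the proof is imported, so the write-up here should reduce to the routine forward implication plus a pointer to \cite{BM} for the converse.
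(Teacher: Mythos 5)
The paper offers no proof of this statement at all---it is imported verbatim from \cite{BM}---so the only parts of your proposal that can be checked are the forward implication and your sketch of how the converse ``would'' go. The forward implication is correct and is indeed routine: for $f,zf\in K_\theta^\infty$ and $g,zg\in K_\alpha$ one has $\langle A^{\theta,\alpha}_\varphi(zf),zg\rangle=\langle\varphi zf,zg\rangle=\langle\varphi f,g\rangle=\langle A^{\theta,\alpha}_\varphi f,g\rangle$, and density plus boundedness finishes it. Deferring the converse to \cite{BM} is consistent with what the paper itself does.

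However, the mechanism you propose for the converse contains a genuine error. You claim that shift invariance of $A\colon\kdt\to\kda$ is equivalent to the operator identity $A=A^\alpha_{\bar z}AA^\theta_z$, ``the same bookkeeping'' as Theorem \ref{thm5.7}(1). This is false on model spaces: take $\theta=\alpha$ and $A=I_{K_\theta}=A^\theta_1$, which is certainly a truncated Toeplitz operator and shift invariant, yet $A^\theta_{\bar z}A^\theta_z=S_\theta^*S_\theta\neq I$ (it is $I$ minus the orthogonal projection onto the span of the conjugate kernel $\tilde k_0^\theta=S^*\theta$). The identity $\hat T=\hat T^\alpha_{\bar z}\hat T\hat T^\theta_z$ of Theorem \ref{thm5.7}(1) works precisely because $M_z$ maps $\theta H^2$ into itself, so $\hat T^\theta_{\bar z}\hat T^\theta_z=I_{\theta H^2}$; on $K_\theta$ the shift does \emph{not} leave the space invariant, shift invariance only yields the compressed relation on the codimension-one subspace $\{f\in K_\theta: zf\in K_\theta\}$, and this is exactly why the Sarason/\cite{BM} characterization takes the form of a rank-two perturbation identity ($A-S_\alpha AS_\theta^*$ equal to a specific sum of two rank-one operators built from kernel functions) rather than an exact intertwining. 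Any write-up of the converse must handle those two rank-one defect terms; your sketch erases them. A minor further slip: $\bar zK_\theta\subset K_\theta$ is not true as written (for $f\in H^2$ with $f(0)\neq0$ the function $\bar zf$ is not even in $H^2$); what holds is $T_{\bar z}K_\theta\subset K_\theta$, and the set of $f$ with $zf\in K_\theta$ is $K_\theta\ominus\mathbb{C}\tilde k_0^\theta$, which you do identify correctly as having codimension one.
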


Now let us consider the dual case.
In the context of the definition of shift invariance (Definition \ref{dfshinv}) the following lemma will be useful.
\begin{lemma}\label{l8.1}
Let $f\in K_\theta^\perp$. Then $z f\in K_\theta^\perp$ if and only if $f$ is ortho\-go\-nal to the function $\bar z\in L^2$, i.e., $f\in \theta H^2\oplus \bar z^2\overline{H^2}$.
\end{lemma}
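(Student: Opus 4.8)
The statement to prove is: for $f \in K_\theta^\perp = \theta H^2 \oplus H^2_-$, we have $zf \in K_\theta^\perp$ if and only if $f \perp \bar z$, equivalently $f \in \theta H^2 \oplus \bar z^2\overline{H^2}$. The plan is to decompose $f$ along $L^2 = K_\theta \oplus \theta H^2 \oplus H^2_-$ and track where multiplication by $z$ sends each summand. First I would write $f = f_1 + f_2$ with $f_1 \in \theta H^2$ and $f_2 \in H^2_-$. Since $\theta H^2$ is invariant under $M_z$ (it is $\theta$ times the shift-invariant space $H^2$), we have $zf_1 \in \theta H^2 \subset K_\theta^\perp$ automatically, so the only obstruction to $zf \in K_\theta^\perp$ comes from the term $zf_2$.

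Next I would analyze $zf_2$ for $f_2 \in H^2_- = \overline{zH^2}$. Writing $f_2 = \sum_{n\ge 1} c_n \bar z^n$ in the orthonormal basis $\{\bar z^n : n \ge 1\}$ of $H^2_-$, multiplication by $z$ gives $zf_2 = c_1 + \sum_{n \ge 2} c_n \bar z^{n-1} = c_1 \cdot 1 + \sum_{m \ge 1} c_{m+1}\bar z^m$. The tail $\sum_{m\ge1} c_{m+1}\bar z^m$ lies in $H^2_- \subset K_\theta^\perp$, so $zf \in K_\theta^\perp$ holds if and only if the constant term $c_1 \cdot 1 \in K_\theta^\perp$, i.e., $c_1 \langle 1, \cdot\rangle$ contributes nothing to the $K_\theta$-component. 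Here I would use that $1 \in H^2$ and $K_\theta \ne \{0\}$ (since $\theta$ is nonconstant), so the constant function $1$ decomposes as $1 = P_\theta 1 + P_{\theta H^2} 1$ with $P_\theta 1 = k_0^\theta = 1 - \overline{\theta(0)}\theta \ne 0$; hence $1 \in K_\theta^\perp$ forces $c_1 = 0$. Conversely $c_1 = 0$ exactly says $f_2 \perp \bar z$, and since $f_1 \in \theta H^2 \subset zH^2 \perp \bar z$, this is equivalent to $f \perp \bar z$; and then $c_1 = 0$ means $f_2 \in \overline{z^2 H^2} = \bar z^2 \overline{H^2}$, giving $f \in \theta H^2 \oplus \bar z^2\overline{H^2}$.

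There is no serious obstacle here; the only point requiring a half-line of care is verifying that the constant function $1$ is genuinely not in $K_\theta^\perp$ when $\theta$ is nonconstant, which is immediate from $P_\theta 1 = k_0^\theta \ne 0$ (indeed $k_0^\theta$ is the reproducing kernel at $0$, nonzero in any nontrivial model space). One could alternatively phrase the whole argument directly in terms of orthogonality: $zf \in K_\theta^\perp \iff \langle zf, h\rangle = 0$ for all $h \in K_\theta \iff \langle f, \bar z h\rangle = 0$ for all $h \in K_\theta$, and then observe that $\{\bar z h : h \in K_\theta\}$ together with the already-known constraints pins down the condition $\langle f, \bar z\rangle = 0$; but the basis computation above is cleaner and more self-contained.
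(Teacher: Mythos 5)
Your argument is correct and is essentially the paper's own proof: the paper writes $f=\bar z\bar g+\theta h$ with $g,h\in H^2$ and observes that $zf=\bar g+z\theta h$ lies in $K_\theta^\perp$ only if $\overline{g(0)}=0$, which is exactly your observation that the constant term $c_1$ of $zf_2$ must vanish because $1\notin K_\theta^\perp$ (you usefully make explicit the reason, namely $P_\theta 1=k_0^\theta=1-\overline{\theta(0)}\theta\neq 0$ for nonconstant $\theta$). One trivial slip: the inclusion $\theta H^2\subset zH^2$ that you invoke to get $f_1\perp\bar z$ holds only when $\theta(0)=0$; what you actually need is just $\theta H^2\subset H^2$ and $H^2\perp\bar z$, which is immediate.
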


\begin{proof}
Let $f=\bar z \bar g+\theta h\in K_\theta^\perp$ with $g, h\in H^2$. Note that $z f=\bar g+z\theta h\in K_\theta^\perp$ only if
%$$0=\langle\bar g, k_0^\theta \rangle=\overline{g(0)}(1-|\theta (0)|^2).$$ Hence
$\overline{g(0)}=0$. Thus
$g=zg_1$, for some $g_1\in H^2$.
\end{proof}

Let $\alpha$, $\theta$ be  nonconstant inner functions. Recall that $K_\theta^\perp=\theta H^2 \oplus H^2_-$, $K_\alpha^\perp=\alpha H^2 \oplus H^2_-$.
Let us consider the set
\begin{equation}\label{ttmat}
\mathcal{T}^2(K_\theta^\perp,K_\alpha^\perp)=\left\{\begin{bmatrix}\hat{T}_{\varphi_1}^{\theta,\alpha}&\check{\Gamma}_{\varphi_2}^{\alpha}
\\\hat{\Gamma}_{\varphi_3}^{\theta}&\check{T}_{\varphi_4}\end{bmatrix}\colon \varphi_i\in L^\infty\  \text{for} \ i=1,2,3,4 \right\}.
\end{equation}%
Now we will characterize all shift invariant operators in $\mathcal{B}(K_\theta^\perp,K_\alpha^\perp)$.

\begin{theorem}\label{shinth}
Let $\alpha, \theta$ be nonconstant inner functions. %and $K_\theta^\perp=\theta H^2 \oplus H^2_-$, $K_\alpha^\perp=\alpha H^2 \oplus H^2_-$.
The operator $D\in \mathcal{B}(K_\theta^\perp,K_\alpha^\perp)$ is shift invariant if and only if $D\in \mathcal{T}^2(K_\theta^\perp,K_\alpha^\perp)$.
%
%there are $\varphi_i\in L^\infty$ for $i=1,2,3,4$ such that
%\begin{equation}
%\label{tmat1}D=\begin{bmatrix}\hat{T}_{\varphi_1}^{\theta,\alpha}&\check{\Gamma}_{\varphi_2}^{\alpha}
%\\\hat{\Gamma}_{\varphi_3}^{\theta}&\check{T}_{\varphi_4}\end{bmatrix}.
%\end{equation}
\end{theorem}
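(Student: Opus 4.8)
The plan is to write $D$ as the block matrix $D=\begin{bmatrix}\dtha&\dba\\ \dc&\dd\end{bmatrix}$ relative to the decompositions $K_\theta^\perp=\theta H^2\oplus H^2_-$ and $K_\alpha^\perp=\alpha H^2\oplus H^2_-$, and to show that shift invariance of $D$ splits into four independent conditions, one for each block, each of which is precisely the operator identity from Theorem \ref{thm5.7} characterizing membership of that block in the corresponding class with an $L^\infty$ symbol. Since every step will be an equivalence, both implications of the theorem follow at once.

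First, by Lemma \ref{l8.1}, a function $f\in K_\theta^\perp$ satisfies $zf\in K_\theta^\perp$ exactly when $f\in\theta H^2\oplus\bar z^2\overline{H^2}$, and similarly for $K_\alpha^\perp$; in particular this restricts only the $H^2_-$--component of $f$. The map $(f,g)\mapsto\langle D(zf),zg\rangle-\langle Df,g\rangle$ is sesquilinear on $(\theta H^2\oplus\bar z^2\overline{H^2})\times(\alpha H^2\oplus\bar z^2\overline{H^2})$, so $D$ is shift invariant if and only if this form vanishes on each of the four pure products of summands. Writing out the orthogonal decompositions of $D(zf)$ and $Df$ in $\alpha H^2\oplus H^2_-$ and discarding orthogonal terms, the four conditions become:
\begin{enumerate}
\item $\langle\dtha(zf),zg\rangle=\langle\dtha f,g\rangle$ for all $f\in\theta H^2$, $g\in\alpha H^2$;
\item $\langle\dc(zf),zg\rangle=\langle\dc f,g\rangle$ for all $f\in\theta H^2$, $g\in\bar z^2\overline{H^2}$;
\item $\langle\dba(zf),zg\rangle=\langle\dba f,g\rangle$ for all $f\in\bar z^2\overline{H^2}$, $g\in\alpha H^2$;
\item $\langle\dd(zf),zg\rangle=\langle\dd f,g\rangle$ for all $f,g\in\bar z^2\overline{H^2}$.
\end{enumerate}

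Next I would translate each of these into the matching identity of Theorem \ref{thm5.7}. Recalling that $\hat{T}_z^\theta$ is multiplication by $z$ on $\theta H^2$, $\check{T}_{\bar z}$ is multiplication by $\bar z$ on $H^2_-$, $\check{T}_z=P^-M_{z|H^2_-}$, and $\hat{T}_{\bar z}^\alpha=P_{\alpha H^2}M_{\bar z|\alpha H^2}$, and using the elementary substitutions $h=zf$ and $k=zg$ (which map $\bar z^2\overline{H^2}$ bijectively onto $H^2_-$) together with $\langle P_Xu,v\rangle=\langle u,v\rangle$ for $v\in X$, condition (1) becomes $\dtha=\hat{T}_{\bar z}^\alpha\,\dtha\,\hat{T}_z^\theta$, condition (2) becomes $\check{T}_z\,\dc=\dc\,\hat{T}_z^\theta$, condition (3) becomes $\dba\,\check{T}_{\bar z}=\hat{T}_{\bar z}^\alpha\,\dba$, and condition (4) becomes $\dd=\check{T}_z\,\dd\,\check{T}_{\bar z}$. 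By Theorem \ref{thm5.7}(1),(3),(4),(2) these hold if and only if $\dtha=\hat{T}_{\varphi_1}^{\theta,\alpha}$, $\dc=\hat{\Gamma}_{\varphi_3}^{\theta}$, $\dba=\check{\Gamma}_{\varphi_2}^{\alpha}$ and $\dd=\check{T}_{\varphi_4}$ for some $\varphi_1,\varphi_2,\varphi_3,\varphi_4\in L^\infty$, i.e.\ if and only if $D\in\mathcal{T}^2(K_\theta^\perp,K_\alpha^\perp)$.

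The main point requiring care is this last translation: in conditions (2)--(4) the test functions are confined to $\bar z^2\overline{H^2}$ rather than to all of $H^2_-$, whereas the operator identities of Theorem \ref{thm5.7}(2)--(4) are phrased on all of $H^2_-$; I would verify that the substitution $h=zf$ (respectively $k=zg$), being a bijection of $\bar z^2\overline{H^2}$ onto $H^2_-$, loses nothing, so the restricted family of scalar identities is still equivalent to the full operator identities. One should also keep track that the constraint \textup{``}$zf\in K_\theta^\perp$\textup{''} in Definition \ref{dfshinv} affects only the $H^2_-$--component of $f$, which is why the $\theta H^2$--side in (1) and (2) is unrestricted. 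Beyond that, the argument is routine Hilbert space bookkeeping with the shifts $z,\bar z$ and the projections $P^-$ and $P_{\alpha H^2}$.
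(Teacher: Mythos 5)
Your proposal is correct and follows essentially the same route as the paper's proof: restrict to $(\theta H^2\oplus\bar z^2\overline{H^2})\times(\alpha H^2\oplus\bar z^2\overline{H^2})$ via Lemma \ref{l8.1}, split by blocks, and reduce each block to the corresponding characterization in Theorem \ref{thm5.7}. The only (cosmetic) difference is that you organize the argument as a single chain of equivalences using sesquilinearity of the defect form, whereas the paper proves the two implications separately; the underlying computations are identical.
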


The theorem above is new even in the symmetric case, i.e., $\theta=\alpha$. An immediate consequence of it is the following.
\begin{corollary}\label{dshinv}
Let $D\in \mathcal{B}(K_\theta^\perp, K_\alpha^\perp)$. If $D$ is an asymmetric dual truncated Toeplitz operator, then $D$ is shift invariant, i.e.,
\begin{equation}\label{eq8.1}
  \langle D z f,z g\rangle=\langle D f,g\rangle
\end{equation}
for all $f,g\in K_\theta^\perp$ such that $f, g$ are orthogonal to $\bar z$.
\end{corollary}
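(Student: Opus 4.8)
The plan is to prove Theorem \ref{shinth} — the characterization of shift invariant operators $D\in\mathcal B(K_\theta^\perp,K_\alpha^\perp)$ as exactly the members of $\mathcal T^2(K_\theta^\perp,K_\alpha^\perp)$ — and then read off Corollary \ref{dshinv} immediately: an ADTTO $D_\varphi^{\theta,\alpha}$ with $\varphi\in L^\infty$ has, by the matrix decomposition recalled before Theorem \ref{th8.12}, all four block entries $\hat T_\varphi^{\theta,\alpha}$, $\check\Gamma_\varphi^\alpha$, $\hat\Gamma_\varphi^\theta$, $\check T_\varphi$ with the single bounded symbol $\varphi$, so it lies in $\mathcal T^2(K_\theta^\perp,K_\alpha^\perp)$, hence is shift invariant; and shift invariance unwinds, via Lemma \ref{l8.1}, to the stated identity $\langle Dzf,zg\rangle=\langle Df,g\rangle$ for $f,g$ orthogonal to $\bar z$. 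So the whole content is Theorem \ref{shinth}, and the corollary is a two-line deduction.

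For the ``$\Leftarrow$'' direction of Theorem \ref{shinth}, I would take $D\in\mathcal T^2(K_\theta^\perp,K_\alpha^\perp)$ and check \eqref{shinvt} block by block. By Lemma \ref{l8.1}, the $f\in K_\theta^\perp$ with $zf\in K_\theta^\perp$ are precisely those in $\theta H^2\oplus\bar z^2\overline{H^2}$; for such $f$, write $f=\theta h+\bar z^2\bar k$ with $h\in H^2$, $k\in H^2$, so $zf=\theta(zh)+\bar z\bar k\in\theta H^2\oplus H^2_-$, and similarly for $g\in K_\alpha^\perp$. The inner product $\langle Dzf,zg\rangle$ then splits into four terms coming from the four blocks, and matching them against the four terms of $\langle Df,g\rangle$ reduces to four scalar identities: the $\theta H^2\to\alpha H^2$ block needs $\langle\hat T^{\theta,\alpha}_{\varphi_1}(\theta zh),\alpha zh'\rangle=\langle\hat T^{\theta,\alpha}_{\varphi_1}(\theta h),\alpha h'\rangle$ (equivalently $M_z$-invariance of the classical Toeplitz operator $T_{\bar\alpha\varphi_1}$ on $H^2$, via Proposition \ref{propc2}(1)); the $H^2_-\to H^2_-$ block needs the analogous statement for $\check T_{\varphi_4}=JT_{\bar\varphi_4}J$; and the two off-diagonal blocks reduce to the intertwining relations of Theorem \ref{thm5.7}(3),(4), i.e. $\check T_z\hat\Gamma^\theta_{\varphi_3}=\hat\Gamma^\theta_{\varphi_3}\hat T^\theta_z$ and $\check\Gamma^\alpha_{\varphi_2}\check T_{\bar z}=\hat T^\alpha_{\bar z}\check\Gamma^\alpha_{\varphi_2}$. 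Each of these is either a classical fact or already in the excerpt, so this direction is essentially bookkeeping; the one subtlety is to make sure the cross terms (e.g.\ $\langle\hat T$-part of $Dzf$, $\alpha$-part of $zg\rangle$ against $\langle\check\Gamma$-part of $Df$, $\cdot\rangle$) really do pair up correctly after the shift, which is where the specific form of the four block operators is used.

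For the ``$\Rightarrow$'' direction I would argue that shift invariance of $D$ forces each block to satisfy the appropriate compression/intertwining relation, then invoke Theorem \ref{thm5.7} to conclude each block has a bounded symbol. Concretely: feeding into \eqref{shinvt} vectors $f=\theta h$ with $zh\in H^2$ and $g=\alpha h'$ with $zh'\in H^2$ (both orthogonal to $\bar z$ since $\theta H^2,\alpha H^2\perp\bar z$... actually $\theta h\perp\bar z$ automatically as $\theta h\in H^2$ unless $\theta h$ has a $z^0$... careful: $\theta h\in H^2$ so $\theta h\perp\bar z$ always) isolates the $(1,1)$-block and yields $\hat T^{\theta,\alpha}_{D,11}=\hat T^\alpha_{\bar z}\,\hat T^{\theta,\alpha}_{D,11}\,\hat T^\theta_z$, which by Theorem \ref{thm5.7}(1) gives $\hat T^{\theta,\alpha}_{D,11}\in\mathcal T(\theta H^2,\alpha H^2)$ with bounded symbol. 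Choosing $f\in H^2_-$, $g\in H^2_-$ orthogonal to $\bar z$ (i.e.\ $f,g\in\bar z^2\overline{H^2}$) isolates the $(2,2)$-block and gives $\check T_{D,22}=\check T_z\check T_{D,22}\check T_{\bar z}$, hence $\check T_{D,22}\in\mathcal T(H^2_-)$ by Theorem \ref{thm5.7}(2); and mixed choices ($f\in\theta H^2$, $g\in H^2_-$, or vice versa) isolate the off-diagonal blocks and produce exactly the intertwining relations of Theorem \ref{thm5.7}(3),(4), giving bounded symbols there too. Since every bounded Hankel operator has a bounded symbol (cited in the excerpt after \eqref{zerro}), all four blocks have $L^\infty$ symbols, so $D\in\mathcal T^2(K_\theta^\perp,K_\alpha^\perp)$.

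The main obstacle I anticipate is the careful handling of the ``orthogonal to $\bar z$'' constraint from Lemma \ref{l8.1} when isolating the off-diagonal blocks in the ``$\Rightarrow$'' direction: one cannot simply plug in an arbitrary $f\in\theta H^2$ together with an arbitrary $g\in H^2_-$, because although $f=\theta h\in H^2$ is automatically orthogonal to $\bar z$, a vector $g\in H^2_-$ is orthogonal to $\bar z$ only if $g\in\bar z^2\overline{H^2}$, so the test identity is only available on a proper (but dense, and $S$-shift-stable enough) subset, and one must check that the resulting relation $\check T_z\hat\Gamma = \hat\Gamma\hat T^\theta_z$ genuinely holds on all of $\theta H^2$ by a density/continuity argument. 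A parallel care is needed for the cross-block contributions in the ``$\Leftarrow$'' direction. Once the right dense sets are fixed and one verifies the relations survive passage to the closure, the rest is a direct appeal to Theorem \ref{thm5.7} and Proposition \ref{propc2}.
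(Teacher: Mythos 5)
Your proposal follows the paper's route exactly: the corollary is read off from Theorem \ref{shinth}, since an ADTTO with bounded symbol trivially lies in $\mathcal{T}^2(K_\theta^\perp,K_\alpha^\perp)$, and your block-by-block verification of Theorem \ref{shinth} via Lemma \ref{l8.1}, Theorem \ref{thm5.7} and the intertwining relations is the paper's own argument. The density worry you raise at the end does not actually arise: testing against $\bar z^2\bar g_-$ and moving one power of $z$ across the inner product lets the test vector $\bar z\bar g_-$ range over all of $H^2_-$, so the intertwining relation holds on all of $\theta H^2$ with no closure argument needed.
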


Another consequence of Theorem \ref{shinth} is that, in contrast to truncated Toeplitz operators which can be characterized by shift invariance (see \cite{sar1}), shift invariance is not a sufficient condition to prove that an operator is a dual truncated Toeplitz operator.

\begin{proof}[Proof of Theorem \ref{shinth}]
Assume that $D$ belongs to the set given by \eqref{ttmat}. Taking into consideration Lemma \ref{l8.1} let us take
 $f=\bar z^2 \bar f_-+\theta f_+$, $g=\bar z^2 \bar g_-+\alpha g_+$, $  f_-, f_+, g_-, g_+\in H^2$. Then we have
\begin{multline*}
\langle DM_z f, M_z g\rangle=\langle D(\bar z\bar f_-+\theta z f_+),\bar z\bar g_-+\alpha zg_+\rangle\\
=\langle \hat T_{\varphi_1}^{\theta,\alpha} \theta zf_+,\alpha zg_+\rangle+ \langle \check{\Gamma}_{\varphi_2}^\alpha\bar z\bar f_-,\alpha zg_+\rangle\\+\langle \hat{\Gamma}_{\varphi_3}^\theta\theta z f_+,\bar z\bar g_-\rangle+\langle \check{T}_{\varphi_4} \bar z\bar f_-,\bar z\bar g_-\rangle.
\end{multline*}
Note firstly that, by Theorem \ref{thm5.7},
\begin{equation}\label{t1}
 \langle \hat T_{\varphi_1}^{\theta,\alpha} \theta zf_+,\alpha zg_+\rangle= \langle \hat{T}_{\bar z}^\alpha\hat T_{\varphi_1}^{\theta,\alpha} \hat{T}_z^\theta \theta f_+,\alpha g_+\rangle=\langle \hat T_{\varphi_1}^{\theta,\alpha} \theta f_+,\alpha g_+\rangle.
\end{equation} Moreover,  we have
\begin{multline}\label{t2}
  \langle \check{\Gamma}_{\varphi_2}^\alpha\bar z\bar f_-,\alpha zg_+\rangle=\langle \hat{T}_{\bar z}^\alpha\check{\Gamma}_{\varphi_2}^\alpha  \bar z\bar f_-,\alpha g_+\rangle\\=\langle \check{\Gamma}_{\varphi_2}^\alpha \check{T}_{\bar z} \bar z\bar f_-,\alpha g_+\rangle=\langle \check{\Gamma}_{\varphi_2}^\alpha \bar z^2\bar f_-,\alpha g_+\rangle.
\end{multline}
Recall that  $\check{T}_{\bar z}\check{T}_{z|\bar z {H^2_-}} =I_{|H^2_-}$, see  \cite[Corollary 6.5]{CKLPcom}.
 Hence,  we get
\begin{multline}\label{t3}
  \langle \hat{\Gamma}_{\varphi_3}^\theta\theta z f_+,\bar z\bar g_-\rangle=\langle \check{T}_{\bar z}\hat{\Gamma}_{\varphi_3}^\theta \hat{T}^\theta_z\theta  f_+,\bar z^2\bar g_-\rangle\\=
  \langle \check{T}_{\bar z}\check{T}_z\hat{\Gamma}_{\varphi_3}^\theta\theta  f_+,\bar z^2\bar g_-\rangle=\langle \hat{\Gamma}_{\varphi_3}^\theta\theta  f_+,\bar z^2\bar g_-\rangle.
\end{multline}

Finally, by Theorem \ref{thm5.7} we have
\begin{equation}\label{t4}
\langle \check{T}_{\varphi_4} \bar z\bar f_-,\bar z\bar g_-\rangle=\langle \check{T}_{z}\check{T}_{\varphi_4} \check{T}_{\bar z}\bar z\bar f_-,\bar z\bar g_-\rangle=
\langle \check{T}_{\varphi_4}\bar z^2\bar f_-,\bar z^2\bar g_-\rangle.%
%=\langle \check{T}_{ z}\check{T}_z\check{T}_{\varphi_4}\check{T}_{\bar z} \check{T}_z^\theta\bar z^2\bar f_-,\bar z^2\bar g_-\rangle\\=
%\langle \check{T}_{\varphi_4}\check{T}_{\bar z} \check{T}_z\bar z^2\bar f_-,\check{T}_{\bar z}\check{T}_z\bar z^2\bar g_-\rangle
\end{equation}
Taking \eqref{t1}--\eqref{t4} into account we have obtained $\langle D z f, z g\rangle=\langle D f,  g\rangle.$

For the inverse implication assume  that $D\in\mathcal{B}(K_\theta^\perp, K_\alpha^\perp)$ is shift invariant and write $D$ as the matrix $$D=\begin{bmatrix}\daa&\dba\\\dc&\dd\end{bmatrix}.$$
Since $D$ is shift invariant, for  $f=\theta f_+$, $g=\alpha g_+$, $   f_+,  g_+\in H^2$, we have
 \begin{multline*} \langle \daa  \theta f_+,\alpha g_+\rangle=\langle \daa z \theta f_+,z\alpha g_+\rangle\\=\langle \hat{T}_{\bar z}^\alpha\daa \hat{T}_z^\theta \theta f_+,\alpha g_+\rangle . \end{multline*}
By  Theorem \ref{thm5.7} there is $\varphi_1\in L^\infty$ such that $\daa=\hat{T}^{\theta,\alpha}_{\varphi_1}$. Next, for  $f=\bar z^2 \bar f_-$, $g=\bar z^2 \bar g_-$, $   f_-,  g_-\in H^2$, by \eqref{eq8.1} we have
 \[  \langle \dd  \bar z \bar f_-,\bar z \bar g_-\rangle=\langle \dd \bar z^2  f_-,\bar z^2\bar g_-\rangle=\langle \check{T}_{ z}\dd \check{T}_{\bar z} \bar z\bar f_-,\bar z \bar g_-\rangle . \]
By Theorem \ref{thm5.7} there is $\varphi_4\in L^\infty$ such that $\dd=\check{T}_{\varphi_4}$. Now, for  $f=\theta f_+$, $g=\bar z^2 \bar g_-$, $  f_+, g_-\in H^2$, by \eqref{eq8.1} we have
\begin{multline*}
\langle \dc \hat{T}_z^\theta \theta  f_+,\bar z \bar g_-\rangle=\langle \dc  z\theta  f_+,z\bar z^2\bar g_-\rangle\\
=\langle \dc  \theta  f_+,\bar z^2\bar g_-\rangle=\langle \check{T}_z\dc  \theta  f_+,\bar z\bar g_-\rangle .
\end{multline*}
By Theorem \ref{thm5.7} there is $\varphi_3\in L^\infty$ such that $\dc=\hat{\Gamma}^\theta_{\varphi_3}$.
The equality $\dba=\check{\Gamma}^\alpha_{\varphi_2}$ for some $\varphi_2\in L^\infty$ can be shown similarly.
\end{proof}

\section{Reflexivity and transitivity}

The subspace $\mathcal{T}(H^2)$ of all Toeplitz operators is transitive and $2$--reflexive \cite{AP}.  We show below that the space of (asymmetric) (dual) truncated Toeplitz operators has similar properties.
%The reflexivity results below are new even for the symmetric case (i.e. $\alpha=\theta$).
\begin{theorem}\label{refl} Let $\theta$, $\alpha$ be two nonconstant inner functions.
The space $\mathcal{T}(K_\theta, K_\alpha)$ of all bounded asymmetric truncated Toeplitz operators is weak*(WOT)--closed, transitive and $2$--reflexive.
\end{theorem}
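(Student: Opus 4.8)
The plan is to establish the three properties separately, using shift invariance (Theorem~\ref{sh-inv}) as the structural backbone. First I would observe that $\mathcal{T}(K_\theta,K_\alpha)$ is weak*--closed: by Theorem~\ref{sh-inv} an operator $A\in\mathcal{B}(K_\theta,K_\alpha)$ lies in $\mathcal{T}(K_\theta,K_\alpha)$ precisely when the countable family of identities $\langle A(zf),zg\rangle=\langle Af,g\rangle$ holds for $f\in K_\theta^\infty$, $g\in K_\alpha^\infty$ with $zf\in K_\theta$, $zg\in K_\alpha$. Each such identity is weak* continuous in $A$ (it is a difference of two evaluation functionals against fixed vectors), so the set cut out by all of them is weak*--closed; since $K_\theta$, $K_\alpha$ are finite- or infinite-dimensional separable spaces, the weak* and WOT topologies agree on bounded sets, and boundedness is automatic here because every shift invariant operator on these spaces is already assumed bounded in the statement. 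Hence $\mathcal{T}(K_\theta,K_\alpha)=(\mathcal{T}(K_\theta,K_\alpha)_\perp)^\perp$.

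For transitivity I would compute the preannihilator intersected with rank--one operators. A rank--one $t=f\otimes g$ with $f\in K_\alpha$, $g\in K_\theta$ annihilates $\mathcal{T}(K_\theta,K_\alpha)$ iff $\langle A_\varphi^{\theta,\alpha} f', g\rangle\langle\!\langle\cdots$ — more precisely iff $\langle A_\varphi^{\theta,\alpha}g, f\rangle=0$ for all bounded symbols $\varphi$, i.e. $\langle P_\alpha(\varphi g),f\rangle=\langle\varphi g, f\rangle=0$ for all $\varphi\in L^\infty$ (using that $g\in K_\theta^\infty$, so $\varphi g\in L^2$ with the pairing making sense on a dense set). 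But $g\bar f\in L^1$ and $\int \varphi g\bar f\,dm=0$ for all $\varphi\in L^\infty$ forces $g\bar f=0$ a.e.; since $g\in K_\theta$ and $f\in K_\alpha$ are nonzero on sets of positive measure (inner-function quotients do not vanish on sets of positive measure), this is impossible unless $f=0$ or $g=0$. Therefore $\mathcal{T}(K_\theta,K_\alpha)_\perp\cap\mathcal{F}_1=\{0\}$, which is transitivity.

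For $2$--reflexivity, the task is to show $\mathcal{T}(K_\theta,K_\alpha)=\big(\mathcal{T}(K_\theta,K_\alpha)_\perp\cap\mathcal{F}_2\big)^\perp$. Since the left side is contained in the right, I need the reverse inclusion: if $A\in\mathcal{B}(K_\theta,K_\alpha)$ annihilates every rank--two preannihilator element, then $A$ is shift invariant. The key is to produce, for each pair of ``consecutive'' test vectors $(f,zf)$ and $(g,zg)$ appearing in Definition~\ref{dfshinv}, an explicit rank--two operator $t=f\otimes g - (zf)\otimes(zg)$ in $K_\alpha\otimes K_\theta$ that lies in $\mathcal{T}(K_\theta,K_\alpha)_\perp$; testing $A$ against $t$ yields exactly $\langle A f,g\rangle-\langle A(zf),zg\rangle=0$, whence $A$ is shift invariant and so $A\in\mathcal{T}(K_\theta,K_\alpha)$ by Theorem~\ref{sh-inv}. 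Thus the main point to verify is that $t=f\otimes g-(zf)\otimes(zg)\in\mathcal{T}(K_\theta,K_\alpha)_\perp$ whenever $f,zf\in K_\alpha$ and $g,zg\in K_\theta$; this is the dual reformulation of the defining identity \eqref{shinvt} and amounts to checking $\langle A_\varphi^{\theta,\alpha} g,f\rangle=\langle A_\varphi^{\theta,\alpha}(zg),zf\rangle$ for all symbols $\varphi$, which is precisely shift invariance of truncated Toeplitz operators (already known, or a direct computation with $P_\alpha M_\varphi$ and the shift relations $P_\alpha M_z P_\alpha = S_\alpha$, $P_\theta M_{\bar z}P_\theta=S_\theta^*$).

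**Main obstacle.** The routine verifications (weak*--closedness, the $L^1$ argument for transitivity) are unproblematic; the real content is the $2$--reflexivity step, specifically showing that the rank--two operators $f\otimes g-(zf)\otimes(zg)$ not only lie in the preannihilator but are abundant enough that their annihilator is no larger than $\mathcal{T}(K_\theta,K_\alpha)$. This reduces, via Theorem~\ref{sh-inv}, to confirming that these particular rank--two functionals already detect the full list of shift invariance conditions — i.e. that one does not need higher-rank operators to separate non--shift--invariant operators from shift invariant ones. I expect this to go through cleanly because Definition~\ref{dfshinv} is itself phrased as a family of rank--two conditions, but care is needed with the density issue (working on $K_\theta^\infty$, $K_\alpha^\infty$ and the subspaces of vectors $f$ with $zf$ still in the model space) and with checking that these $f\otimes g-(zf)\otimes(zg)$ genuinely span a weak*--dense subset of $\mathcal{T}(K_\theta,K_\alpha)_\perp$ rather than merely lying inside it.
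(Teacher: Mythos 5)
Your proposal is correct and follows essentially the same route as the paper: the $L^1$ argument $\int_{\mathbb T}\varphi f\bar g\,dm=0$ for all $\varphi\in L^\infty$ for transitivity, and for $2$--reflexivity the rank--two operators $f\otimes g-(zf)\otimes(zg)$ combined with Theorem~\ref{sh-inv}. Your closing worry about these rank--two operators spanning a weak*--dense subset of the preannihilator is unnecessary: $2$--reflexivity only requires $\big(\mathcal{T}(K_\theta,K_\alpha)_\perp\cap\mathcal{F}_2\big)^\perp\subset\mathcal{M}^\perp\subset\mathcal{T}(K_\theta,K_\alpha)$, which your argument already gives.
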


\begin{corollary}\label{refla}
The space $\mathcal{T}(K_\theta)$ of all bounded truncated Toeplitz operators is weak*(WOT)--closed, transitive and $2$--reflexive.
\end{corollary}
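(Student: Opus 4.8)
The plan is to deduce all three properties from the Bercovici--Martin characterization (Theorem~\ref{sh-inv}) together with the duality between $\mathcal{B}(K_\theta,K_\alpha)$ and $\mathcal{B}_1(K_\alpha,K_\theta)$. First I would reformulate shift invariance in the language of the preannihilator: for $f\in K_\theta$ with $zf\in K_\theta$ and $g\in K_\alpha$ with $zg\in K_\alpha$, put $t_{f,g}=(zf)\otimes(zg)-f\otimes g$, a finite-rank element of $\mathcal{F}_2\subset\mathcal{B}_1(K_\alpha,K_\theta)$. Then $\langle A,t_{f,g}\rangle=\langle A(zf),zg\rangle-\langle Af,g\rangle$, so condition~\eqref{shinvt} for the pair $(f,g)$ is exactly $\langle A,t_{f,g}\rangle=0$. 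Writing $\mathcal{M}:=\spn\{t_{f,g}\colon f,zf\in K_\theta,\ g,zg\in K_\alpha\}$, Theorem~\ref{sh-inv} therefore yields
\[
\mathcal{T}(K_\theta,K_\alpha)=\mathcal{M}^\perp \qquad\text{and}\qquad \mathcal{M}\subset\mathcal{T}(K_\theta,K_\alpha)_\perp\cap\mathcal{F}_2 .
\]

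From the first identity the closedness statements follow at once: $\mathcal{M}^\perp$ is the intersection of the kernels of the functionals $A\mapsto\langle A,t_{f,g}\rangle$, each of which is WOT-continuous since $t_{f,g}$ has finite rank; hence $\mathcal{T}(K_\theta,K_\alpha)$ is WOT-closed, and in particular weak${}^*$-closed. For $2$--reflexivity I would then just chase inclusions. On the one hand, weak${}^*$-closedness gives $\mathcal{T}(K_\theta,K_\alpha)=(\mathcal{T}(K_\theta,K_\alpha)_\perp)^\perp\subset(\mathcal{T}(K_\theta,K_\alpha)_\perp\cap\mathcal{F}_2)^\perp$, since $\mathcal{T}(K_\theta,K_\alpha)_\perp\cap\mathcal{F}_2\subset\mathcal{T}(K_\theta,K_\alpha)_\perp$. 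On the other hand, the inclusion $\mathcal{M}\subset\mathcal{T}(K_\theta,K_\alpha)_\perp\cap\mathcal{F}_2$ gives $(\mathcal{T}(K_\theta,K_\alpha)_\perp\cap\mathcal{F}_2)^\perp\subset\mathcal{M}^\perp=\mathcal{T}(K_\theta,K_\alpha)$. Hence the two sides coincide, which is $2$--reflexivity.

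For transitivity I would take a rank-one operator $t=f\otimes g\in\mathcal{T}(K_\theta,K_\alpha)_\perp$ with $f\in K_\theta$, $g\in K_\alpha$, and test it against the operators $A^{\theta,\alpha}_\varphi$ with $\varphi\in L^\infty$, which are bounded and hence belong to $\mathcal{T}(K_\theta,K_\alpha)$. Using $g\in K_\alpha$,
\[
0=\langle A^{\theta,\alpha}_\varphi,t\rangle=\langle P_\alpha(\varphi f),g\rangle=\langle \varphi f,g\rangle=\int_{\mathbb{T}}\varphi f\bar g\,dm .
\]
Since this holds for every $\varphi\in L^\infty$ and $f\bar g\in L^1$, we get $f\bar g=0$ a.e.\ on $\mathbb{T}$. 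As a nonzero $H^2$-function vanishes only on a set of Lebesgue measure zero, it follows that $f=0$ or $g=0$, so $t=0$; thus $\mathcal{T}(K_\theta,K_\alpha)_\perp\cap\mathcal{F}_1=\{0\}$ and the space is transitive.

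I do not expect a genuine obstacle: the argument leans entirely on Theorem~\ref{sh-inv}. The points requiring a little care are the bookkeeping that \eqref{shinvt} is equivalent to annihilation by the \emph{rank-two} operators $t_{f,g}$ (this is what produces $2$--reflexivity rather than merely $k$--reflexivity for some larger $k$), the fact that $A^{\theta,\alpha}_\varphi$ is bounded for every $\varphi\in L^\infty$, and the Hardy-space fact that a nonzero function in $H^2$ is nonzero almost everywhere. Corollary~\ref{refla} is the special case $\theta=\alpha$.
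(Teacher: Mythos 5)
Your argument is correct and follows essentially the same route as the paper's proof of Theorem~\ref{refl} (of which Corollary~\ref{refla} is the case $\theta=\alpha$): transitivity via testing a rank-one annihilator against $A^{\theta,\alpha}_\varphi$ for all $\varphi\in L^\infty$ to get $f\bar g=0$, and $2$--reflexivity by encoding shift invariance as annihilation of the rank-$\le 2$ operators $zf\otimes zg-f\otimes g$ and invoking Theorem~\ref{sh-inv}. The only nitpick is that your $\mathcal{M}$, being a \emph{span}, need not lie in $\mathcal{F}_2$; the inclusion you actually need is that the generators $t_{f,g}$ lie in $\mathcal{T}(K_\theta,K_\alpha)_\perp\cap\mathcal{F}_2$, which suffices since $\mathcal{M}^\perp$ equals the annihilator of the generating set.
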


\begin{proof}[Proof of Theorem \ref{refl}]
 Weak*(WOT)--closedness of  $\mathcal{T}(K_\theta, K_\alpha)$ was shown in \cite[Theorem 4.2]{sar1} for $\alpha=\theta$. The general case can be proved similarly. It is also a consequence of  $\mathcal{T}(K_\theta, K_\alpha)$ being shift invariant, as we will see below.

For the proof of transitivity assume firstly that a rank-one operator $f\otimes g$ annihilates all operators from $\mathcal{T}(K_\theta, K_\alpha)$, where $f\in\kdt$, $g\in\kda$. Then for all $\varphi\in L^\infty$ we have
\begin{equation}
0=\langle \ata_\varphi f,g\rangle=\langle P_\alpha (\varphi f),g\rangle=\langle \varphi f,g\rangle=\int_{\mathbb{T}}\varphi f\bar gdm.
\end{equation}
Hence $f\bar g=0$ and $f\bar g\in L^1$. Since $f,g\in H^2$, then one of them has to be zero. Since there is no rank-one operator in the preanihilator of $\mathcal{T}(K_\theta, K_\alpha)$, the space $\mathcal{T}(K_\theta, K_\alpha)$ is transitive.

 To prove $2$--reflexivity let us take $f\in\kdt$, $g\in\kda$ such that $zf\in\kdt$, $zg\in\kda$. Since  operators from $\mathcal{T}(K_\theta, K_\alpha)$ are shift invariant, we know that $f\otimes g-zf\otimes zg\in\mathcal{T}(K_\theta, K_\alpha)_\bot$. On the other hand, if for a bounded operator $A\in \mathcal{B}(\kdt,\kda)$ we have
%$$0=tr(A(f\otimes g-zf\otimes zg)=\langle Af,g\rangle-\langle A(zf),zg\rangle, $$ then
$$0=<A,f\otimes g-zf\otimes zg>=\langle Af,g\rangle-\langle A(zf),zg\rangle, $$ then
by Theorem \ref{sh-inv}, $A\in\mathcal{T}(K_\theta, K_\alpha)$. Hence we obtained $2$--reflexivity.
\end{proof}

%It is easy to observe that $\mathcal{T}^2(K_\theta^\perp,K_\alpha^\perp)$ is (WOT)--closed since each component of the matrix is characterized by equalities, see Theorem \ref{thm5.7}.

\begin{theorem}\label{th53}
The subspace $\mathcal{T}^2(K_\theta^\perp,K_\alpha^\perp)$ is weak*(WOT)--closed, transitive  and $2$--reflexive.
\end{theorem}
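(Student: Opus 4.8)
The plan is to derive all three assertions from the identification of $\mathcal{T}^2(K_\theta^\perp,K_\alpha^\perp)$ with the class of shift invariant operators (Theorem \ref{shinth}), using the block decomposition \eqref{ttmat} together with the formulas of Proposition \ref{propc2}; the argument will run in parallel with the proof of Theorem \ref{refl}. The starting observation is that every $A\in\mathcal{T}^2(K_\theta^\perp,K_\alpha^\perp)$ is shift invariant (the ``only if'' direction of Theorem \ref{shinth}), so for all $f\in K_\theta^\perp$ and $g\in K_\alpha^\perp$ with $zf\in K_\theta^\perp$ and $zg\in K_\alpha^\perp$ the rank $\le 2$ trace class operator $t_{f,g}:=f\otimes g-zf\otimes zg$ lies in $\mathcal{T}^2(K_\theta^\perp,K_\alpha^\perp)_\bot\cap\mathcal{F}_2$; conversely, by the ``if'' direction of Theorem \ref{shinth}, an operator $D\in\mathcal{B}(K_\theta^\perp,K_\alpha^\perp)$ belongs to $\mathcal{T}^2(K_\theta^\perp,K_\alpha^\perp)$ precisely when $\langle D,t_{f,g}\rangle=0$ for all such $f,g$.

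This single observation yields two of the three conclusions. Weak*(WOT)--closedness follows because $\mathcal{T}^2(K_\theta^\perp,K_\alpha^\perp)$ is then an intersection of kernels of weak*-continuous functionals. And $2$--reflexivity follows since the inclusion $\mathcal{T}^2(K_\theta^\perp,K_\alpha^\perp)\subseteq\big(\mathcal{T}^2(K_\theta^\perp,K_\alpha^\perp)_\bot\cap\mathcal{F}_2\big)^\perp$ is automatic, while any $D$ annihilating $\mathcal{T}^2(K_\theta^\perp,K_\alpha^\perp)_\bot\cap\mathcal{F}_2$ in particular kills every $t_{f,g}$, hence is shift invariant, hence lies in $\mathcal{T}^2(K_\theta^\perp,K_\alpha^\perp)$ by Theorem \ref{shinth}; this gives the reverse inclusion.

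For transitivity I would take a rank-one operator $f\otimes g\in\mathcal{T}^2(K_\theta^\perp,K_\alpha^\perp)_\bot$ with $f\in K_\theta^\perp$, $g\in K_\alpha^\perp$, and split $f=f_1+f_2$, $g=g_1+g_2$ along $\theta H^2\oplus H^2_-$ and $\alpha H^2\oplus H^2_-$. Pairing $f\otimes g$ with the four ``corner'' operators obtained from \eqref{ttmat} by letting exactly one of $\varphi_1,\varphi_2,\varphi_3,\varphi_4$ be an arbitrary $\varphi\in L^\infty$ and the others be $0$ (each of which belongs to $\mathcal{T}^2(K_\theta^\perp,K_\alpha^\perp)$), and noting that on the pertinent components $\hat{T}^{\theta,\alpha}_\varphi$, $\check{\Gamma}^\alpha_\varphi$, $\hat{\Gamma}^\theta_\varphi$, $\check{T}_\varphi$ all act simply as $\langle\varphi\,\cdot\,,\cdot\rangle$ (the projection $P_{\alpha H^2}$ being absorbed by $g_1\in\alpha H^2$ and $P^-$ by $g_2\in H^2_-$), one obtains $\langle\varphi f_i,g_j\rangle=0$ for all $\varphi\in L^\infty$ and all $i,j\in\{1,2\}$, i.e.
$$f_1\bar g_1=f_1\bar g_2=f_2\bar g_1=f_2\bar g_2=0\quad\text{a.e. on }\mathbb{T}.$$
Since each of $f_1,f_2,g_1,g_2$ lies in $H^2$ or in $H^2_-$, it is nonzero a.e.\ as soon as it is nonzero; hence each of these four identities forces one of its two factors to vanish. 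A short case check then finishes it: if $f_1\neq0$ then $g_1=g_2=0$, hence $g=0$; if $f_1=0$ but $f_2\neq0$ then again $g_1=g_2=0$, hence $g=0$; and if $f_1=f_2=0$ then $f=0$. Thus $\mathcal{T}^2(K_\theta^\perp,K_\alpha^\perp)_\bot\cap\mathcal{F}_1=\{0\}$.

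The one genuinely delicate point is transitivity, and specifically the necessity of pairing against \emph{all four} corner operators. One cannot argue as for $\mathcal{T}(K_\theta,K_\alpha)$: testing only against the asymmetric dual truncated Toeplitz operators $D^{\theta,\alpha}_\varphi$ yields merely $\langle\varphi f,g\rangle=0$ and so $f\bar g=0$ a.e., which does \emph{not} imply $f=0$ or $g=0$, because functions in $K_\theta^\perp$ can vanish on sets of positive measure. What makes the argument work is that $\mathcal{T}^2(K_\theta^\perp,K_\alpha^\perp)$ is large enough to contain the four independent corner operators, which lets one reduce, via the Toeplitz/Hankel dictionary of Proposition \ref{propc2}, to the classical fact that a product of two Hardy-space functions vanishing a.e.\ forces one of them to vanish.
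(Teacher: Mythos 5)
Your proposal is correct and follows essentially the same route as the paper: weak*--closedness and $2$--reflexivity from the rank--two operators $f\otimes g-zf\otimes zg$ via Theorem \ref{shinth}, and transitivity by pairing a rank--one annihilator against the corner operators of \eqref{ttmat} and invoking the a.e.\ nonvanishing of nonzero Hardy-class functions. Your systematic use of all four corners (giving $f_i\bar g_j=0$ for all $i,j$) is marginally more complete than the paper's three-corner case analysis, but it is the same argument.
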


\begin{corollary}
The subspace $\mathcal{T}^2(K_\theta^\perp)$ is weak*(WOT)--closed, transitive  and $2$--reflexive.
\end{corollary}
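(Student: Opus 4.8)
The plan is to run the argument of Theorem~\ref{refl} almost verbatim, with the new shift--invariance characterization Theorem~\ref{shinth} playing the role that Theorem~\ref{sh-inv} plays there. By Lemma~\ref{l8.1}, an operator $D\in\mathcal{B}(K_\theta^\perp,K_\alpha^\perp)$ is shift invariant precisely when $\langle D(zf),zg\rangle=\langle Df,g\rangle$ for all $f\in K_\theta^\perp$, $g\in K_\alpha^\perp$ with $f\perp\bar z$ and $g\perp\bar z$; for each such pair I set $t_{f,g}:=zf\otimes zg-f\otimes g\in\mathcal{B}_1(K_\alpha^\perp,K_\theta^\perp)$, a trace class operator of rank at most two, and note $\langle D,t_{f,g}\rangle=\langle D(zf),zg\rangle-\langle Df,g\rangle$. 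Then Theorem~\ref{shinth} says exactly that $\mathcal{T}^2(K_\theta^\perp,K_\alpha^\perp)$ is the annihilator of the set $\mathcal{M}$ of all these $t_{f,g}$. Being an annihilator, $\mathcal{T}^2(K_\theta^\perp,K_\alpha^\perp)$ is weak*--closed; and since every member of $\mathcal{M}$ has finite rank, each functional $D\mapsto\langle D,t_{f,g}\rangle$ is WOT--continuous, so $\mathcal{T}^2(K_\theta^\perp,K_\alpha^\perp)=\mathcal{M}^\perp$ is WOT--closed as well. That disposes of the closedness claim.

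For transitivity I would take a rank--one operator $f\otimes g$ (with $f\in K_\theta^\perp$, $g\in K_\alpha^\perp$) lying in $\mathcal{T}^2(K_\theta^\perp,K_\alpha^\perp)_\perp$ and write $f=\theta f_++\bar z\bar f_-$, $g=\alpha g_++\bar z\bar g_-$ along $K_\theta^\perp=\theta H^2\oplus H^2_-$ and $K_\alpha^\perp=\alpha H^2\oplus H^2_-$, with $f_\pm,g_\pm\in H^2$. The point is that, by the very definition \eqref{ttmat}, $\mathcal{T}^2(K_\theta^\perp,K_\alpha^\perp)$ contains the block operator having any prescribed $\varphi_i\in L^\infty$ in one of its four slots and zeros in the other three (recall $\hat{T}^{\theta,\alpha}_{0}=\check{\Gamma}^{\alpha}_{0}=\hat{\Gamma}^{\theta}_{0}=\check{T}_{0}=0$). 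Pairing these four families of operators against $f\otimes g$ and using \eqref{hat} and \eqref{czek} --- in which each of $\hat{T}^{\theta,\alpha}_{\varphi_1},\check{\Gamma}^{\alpha}_{\varphi_2},\hat{\Gamma}^{\theta}_{\varphi_3},\check{T}_{\varphi_4}$ is multiplication by the symbol followed by an orthogonal projection --- turns $\langle Df,g\rangle=0$ into the four scalar conditions $\int_{\mathbb T}\varphi_i h_i\,dm=0$ for all $\varphi_i\in L^\infty$, where $h_1=\theta f_+\overline{\alpha g_+}$, $h_2=\bar z\bar f_-\overline{\alpha g_+}$, $h_3=\theta f_+\,zg_-$ and $h_4=\bar f_-g_-$. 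Each $h_i\in L^1$, hence $h_i=0$ a.e.; and since $|\theta|=|\alpha|=1$ a.e.\ this reads $|f_+||g_+|=|f_-||g_+|=|f_+||g_-|=|f_-||g_-|=0$ a.e. A nonzero $H^2$ function being nonzero a.e.\ on $\mathbb T$ (F.\ and M.\ Riesz), it follows that if $f\neq0$ then $f_+\not\equiv0$ or $f_-\not\equiv0$, and in either case both $g_+\equiv0$ and $g_-\equiv0$, i.e.\ $g=0$. Thus $\mathcal{T}^2(K_\theta^\perp,K_\alpha^\perp)_\perp\cap\mathcal{F}_1=\{0\}$.

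For $2$--reflexivity, weak*--closedness gives $\mathcal{T}^2(K_\theta^\perp,K_\alpha^\perp)\subseteq(\mathcal{T}^2(K_\theta^\perp,K_\alpha^\perp)_\perp\cap\mathcal{F}_2)^\perp$ at once. For the reverse inclusion, observe that each $t_{f,g}\in\mathcal{M}$ lies in $\mathcal{T}^2(K_\theta^\perp,K_\alpha^\perp)_\perp\cap\mathcal{F}_2$; hence if $A\in\mathcal{B}(K_\theta^\perp,K_\alpha^\perp)$ annihilates $\mathcal{T}^2(K_\theta^\perp,K_\alpha^\perp)_\perp\cap\mathcal{F}_2$, then $\langle A(zf),zg\rangle=\langle Af,g\rangle$ for all admissible $f,g$, so $A$ is shift invariant and therefore $A\in\mathcal{T}^2(K_\theta^\perp,K_\alpha^\perp)$ by Theorem~\ref{shinth}. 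This gives $\mathcal{T}^2(K_\theta^\perp,K_\alpha^\perp)=(\mathcal{T}^2(K_\theta^\perp,K_\alpha^\perp)_\perp\cap\mathcal{F}_2)^\perp$, and the corollary for $\theta=\alpha$ is then immediate.

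The only step carrying genuinely new content is the block computation in the transitivity part; closedness and $2$--reflexivity are formal once Theorem~\ref{shinth} is in hand, exactly mirroring Theorem~\ref{refl}. I expect the one place to be careful is the extraction of those four scalar conditions: one must check that \eqref{ttmat} really does allow the four symbols to be chosen freely and independently in $L^\infty$, and that the unimodular factors $\theta$ and $\alpha$ can be discarded when passing to moduli; after that, transitivity rests only on the familiar fact that a nonzero $H^2$ function cannot vanish on a set of positive measure.
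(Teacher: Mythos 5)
Your proposal is correct and follows essentially the same route as the paper: the closedness and $2$--reflexivity claims are obtained exactly as in the paper's proof of Theorem~\ref{th53}, by identifying $\mathcal{T}^2(K_\theta^\perp,K_\alpha^\perp)$ with the annihilator of the set $\mathcal{M}$ of rank--two operators $zf\otimes zg-f\otimes g$ via Theorem~\ref{shinth}, and the transitivity claim by pairing a rank--one $f\otimes g$ against the four independent block families. Your transitivity step is in fact written slightly more completely than the paper's (you extract all four conditions $|f_\pm||g_\pm|=0$ a.e.\ at once, whereas the paper treats one cross term explicitly and leaves the symmetric case implicit), but this is a presentational difference, not a different argument.
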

\begin{proof}[Proof of Theorem \ref{th53}]
Let us denote
\begin{equation}
\mathcal{M}=\Big\{zf\otimes z g-f\otimes g\colon\  f\in K_\theta^\perp\cap\{\bar z\}^\perp, g\in K_\alpha^\perp\cap\{\bar z\}^\perp \Big\}.
\end{equation}
Note firstly that an operator $D\in \mathcal{B}(K_\theta^\perp,K_\alpha^\perp)$ is shift invariant if and only if
\[ \langle D, t \rangle=0\quad\text{ for all}\quad t\in \mathcal{M} .\]
On the other hand, Theorem \ref{shinth} means exactly that
\begin{equation}\label{twoperp}
\Big(\mathcal{T}^2(K_\theta^\perp,K_\alpha^\perp)\Big)_\perp\supset\mathcal{M}\quad\quad\text{and }\quad\quad\mathcal{M}^\perp\subset
\mathcal{T}^2(K_\theta^\perp,K_\alpha^\perp).
\end{equation}
Hence the space $\mathcal{T}^2(K_\theta^\perp, K_\alpha^\perp)$ is weak${}^*$--closed (WOT--closed), since it is characterized by annihilating some trace class (finite rank) operators. Moreover, $\mathcal{T}^2(K_\theta^\perp, K_\alpha^\perp)$ is in fact $2$--reflexive
because
\begin{equation*}%$\label{twoperp}
\Big(\Big(\mathcal{T}^2(K_\theta^\perp,K_\alpha^\perp)\Big)_\perp \cap \mathcal{F}_2\Big)^\perp\subset\Big(\mathcal{M}\cap\mathcal{F}_2\Big)^\perp=\mathcal{M}^\perp\subset\mathcal{T}^2(K_\theta^\perp,K_\alpha^\perp).
\end{equation*}

For the proof of transitivity let $f=\bar z \bar f_-+\theta f_+$, $g=\bar z \bar g_-+\alpha g_+$, $  f_-, f_+, g_-, g_+\in H^2$, and $f\otimes g\in \Big(\mathcal{T}^2(K_\theta^\perp,K_\alpha^\perp)\Big)_\perp$. In particular, for all $\varphi_1\in L^\infty$,
\[ 0=\langle \hat{T}^{\theta,\alpha}_{\varphi_1}f_+,g_+ \rangle=\int_{\mathbb{T}} \varphi_1 f_+\bar g_+\,dm. \]
Hence $L^1\ni f_+\bar g_+=0$, and as a consequence $f_+=0$ or $g_+=0$, since  $ f_+, g_+\in H^2$. % by \cite[Corollary 3.21]{GMR}.
Similarly we can show that $f_-=0$ or $g_-=0$. Assume that $f_+\not=0$ a.e. and $g_-\not=0$ a.e. on $\mathbb{T}$. Then, for any $\varphi_3\in L^\infty$, we have
\begin{equation*} 0=\langle \hat{\Gamma}^\theta_{\varphi_3}\theta f_+,\bar z \bar g_- \rangle=\langle P^-{\varphi_3}\theta f_+,\bar z \bar g_- \rangle=\langle {\varphi_3}\theta f_+,\bar z \bar g_- \rangle=\int_{\mathbb{T}} \varphi_3\theta f_+ z  g_-\,dm. \end{equation*}
%\begin{align*} 0=\langle \hat{\Gamma}^\theta_{\varphi_3}\theta f_+,\bar z \bar g_- \rangle&=\langle P^-{\varphi_3}\theta f_+,\bar z \bar g_- \rangle\\&=\langle {\varphi_3}\theta f_+,\bar z \bar g_- \rangle=\int_{\mathbb{T}} \varphi_3\theta f_+ z  g_-\,dm. \end{align*}
This implies that $\theta f_+ z g_-= 0$ a.e. on $\mathbb{T}$, which is a contradiction.
\end{proof}

\begin{theorem}\label{tworef}
Let $\theta$, $ \alpha$ be  nonconstant  inner functions. Then the space $\mathcal{T}(K_\theta^\perp, K_\alpha^\perp )$ of asymmetric dual truncated Toeplitz operators is weak*(WOT)--closed and $2$--reflexive.
\end{theorem}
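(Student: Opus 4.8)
The plan is to combine the characterization of ADTTO's in Theorem~\ref{th8.12} with the $2$--reflexivity of $\mathcal{T}^2(K_\theta^\perp,K_\alpha^\perp)$ established in Theorem~\ref{th53}; the new point is to translate conditions (2) and (4) of Theorem~\ref{th8.12} into orthogonality to families of operators of rank at most two. Recall from the proof of Theorem~\ref{th53} that, by Theorem~\ref{shinth}, one has $\mathcal{T}^2(K_\theta^\perp,K_\alpha^\perp)=\mathcal{M}^\perp$, where
\[
\mathcal{M}=\big\{\,zf\otimes zg-f\otimes g\ :\ f\in K_\theta^\perp\cap\{\bar z\}^\perp,\ g\in K_\alpha^\perp\cap\{\bar z\}^\perp\,\big\}\subset\mathcal{F}_2 ,
\]
and that membership in $\mathcal{T}^2(K_\theta^\perp,K_\alpha^\perp)$ is exactly shift invariance, which (via Theorems~\ref{thm5.7} and~\ref{shinth}) already incorporates conditions (1) and (3) of Theorem~\ref{th8.12}. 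Moreover, if (1) holds then $\dtha=\hat{T}^{\theta,\alpha}_{\varphi_1}$ for some $\varphi_1\in L^\infty$, so (2) forces $\dd=\check{T}_{\varphi_1}\in\mathcal{T}(H^2_-)$ by Proposition~\ref{cor5.3}(2); hence conditions (1)--(4) of Theorem~\ref{th8.12} hold for a given $D\in\mathcal{B}(K_\theta^\perp,K_\alpha^\perp)$ if and only if $D$ is shift invariant and satisfies (2) and (4). Thus it suffices to produce $\mathcal{N}_2,\mathcal{N}_4\subset\mathcal{F}_2$ such that (2) is equivalent to $D\in\mathcal{N}_2^\perp$ and (4) to $D\in\mathcal{N}_4^\perp$; then $\mathcal{T}(K_\theta^\perp,K_\alpha^\perp)=\mathcal{M}^\perp\cap\mathcal{N}_2^\perp\cap\mathcal{N}_4^\perp=(\mathcal{M}\cup\mathcal{N}_2\cup\mathcal{N}_4)^\perp$.

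For condition (4): for $h\in H^2$, writing the relevant pairings as integrals over $\mathbb{T}$ and conjugating gives $\langle\theta\alpha\overline{D^*(\alpha)},\bar z\bar h\rangle=\langle D(\theta\alpha zh),\alpha\rangle$ and $\langle\theta\alpha\overline{D(\theta)},\bar z\bar h\rangle=\langle D^*(\theta\alpha zh),\theta\rangle$. Since $\bar z\bar h\in H^2_-$, pairing the two equalities in (4) against the vectors $\bar z\bar h$ ($h\in H^2$, which exhaust $H^2_-$) shows that (4) is equivalent to $\langle D\theta,\bar z\bar h\rangle=\langle D(\theta\alpha zh),\alpha\rangle$ and $\langle D^*\alpha,\bar z\bar h\rangle=\langle D^*(\theta\alpha zh),\theta\rangle$ for all $h\in H^2$, i.e.\ to $D\in\mathcal{N}_4^\perp$ with
\[
\mathcal{N}_4=\big\{\,\theta\otimes\bar z\bar h-(\theta\alpha zh)\otimes\alpha,\ \ \bar z\bar h\otimes\alpha-\theta\otimes(\theta\alpha zh)\ :\ h\in H^2\,\big\}\subset\mathcal{F}_2 ,
\]
where one uses $\theta,\ \theta\alpha zh\in\theta H^2\subset K_\theta^\perp$, $\ \theta\alpha zh\in\alpha H^2\subset K_\alpha^\perp$, $\ \alpha\in\alpha H^2\subset K_\alpha^\perp$ and $\bar z\bar h\in H^2_-$ to see that $\mathcal{N}_4$ consists of operators of rank at most two.

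For condition (2): using that $(\dtha)^*=P_{\theta H^2}D^*_{|\alpha H^2}$, together with $C_\alpha=M_\alpha J$, $M_\theta C_\alpha=C_\theta M_{\bar\alpha}$ (as in the proof of Proposition~\ref{cor5.3}) and the identity $\langle C_\alpha x,v\rangle=\langle C_\alpha v,x\rangle$ coming from $\langle C_\alpha f,C_\alpha g\rangle=\langle g,f\rangle$, one checks that for $u,v\in H^2_-$ one has $M_\theta C_\alpha u,\,M_\theta C_\alpha v\in\theta\alpha H^2$ and
\[
\big\langle (P^-C_\alpha M_{\bar\theta})_{|\theta H^2}(\dtha)^*(M_\theta C_\alpha)_{|H^2_-}u,\ v\big\rangle=\big\langle D(M_\theta C_\alpha v),\ M_\theta C_\alpha u\big\rangle ,
\]
while $\langle\dd u,v\rangle=\langle Du,v\rangle$. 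As both sides of (2) are bounded operators on $H^2_-$, it is enough to test equality for $u,v$ ranging over the dense subset $H^\infty_-=\overline{zH^\infty}$ (or over the orthonormal basis $\{\bar z^k\}_{k\ge1}$), so (2) is equivalent to $D\in\mathcal{N}_2^\perp$ with
\[
\mathcal{N}_2=\big\{\,u\otimes v-(M_\theta C_\alpha v)\otimes(M_\theta C_\alpha u)\ :\ u,v\in H^\infty_-\,\big\}\subset\mathcal{F}_2 ,
\]
using $u\in H^2_-\subset K_\theta^\perp$, $v\in H^2_-\subset K_\alpha^\perp$, $M_\theta C_\alpha v\in\theta\alpha H^2\subset K_\theta^\perp$ and $M_\theta C_\alpha u\in\theta\alpha H^2\subset K_\alpha^\perp$. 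I expect this step to be the main obstacle: condition (2) couples $\dd$ with $(\dtha)^*$ through the two conjugations $C_\theta,C_\alpha$, and one must track carefully how the projections $P^-$, $P_{\theta H^2}$ and the antilinear involutions interact, so that the scalar identity produced by (2) is precisely orthogonality to operators of rank at most two.

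Putting this together, set $\mathcal{W}=\mathcal{M}\cup\mathcal{N}_2\cup\mathcal{N}_4\subset\mathcal{F}_2$, so that $\mathcal{T}(K_\theta^\perp,K_\alpha^\perp)=\mathcal{W}^\perp$. Being an annihilator, $\mathcal{T}(K_\theta^\perp,K_\alpha^\perp)$ is weak${}^*$--closed, and it is in fact WOT--closed since $\mathcal{W}$ consists of finite rank operators. For $2$--reflexivity, note $\mathcal{W}\subset(\mathcal{W}^\perp)_\perp=\big(\mathcal{T}(K_\theta^\perp,K_\alpha^\perp)\big)_\perp$ and $\mathcal{W}\subset\mathcal{F}_2$, hence $\mathcal{W}\subset\big(\mathcal{T}(K_\theta^\perp,K_\alpha^\perp)\big)_\perp\cap\mathcal{F}_2$, and therefore
\[
\Big(\big(\mathcal{T}(K_\theta^\perp,K_\alpha^\perp)\big)_\perp\cap\mathcal{F}_2\Big)^\perp\subset\mathcal{W}^\perp=\mathcal{T}(K_\theta^\perp,K_\alpha^\perp) ;
\]
since the reverse inclusion is automatic, $\mathcal{T}(K_\theta^\perp,K_\alpha^\perp)$ is $2$--reflexive.
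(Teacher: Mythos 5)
Your proof is correct and follows essentially the same route as the paper: the paper's Lemma~\ref{lranktwo} converts each of the conditions of Theorem~\ref{th8.12} into orthogonality to an explicit family $\mathcal{M}_1,\dots,\mathcal{M}_6$ of rank--at--most--two operators (your $\mathcal{N}_2$ and $\mathcal{N}_4$ coincide, up to sign and a density remark, with $\mathcal{M}_2$ and $\mathcal{M}_5\cup\mathcal{M}_6$), and then concludes weak*--closedness and $2$--reflexivity from the annihilator formalism exactly as you do. The only cosmetic difference is that you absorb conditions (1) and (3) into the shift--invariance family $\mathcal{M}$ of Theorem~\ref{th53} instead of using the separate families $\mathcal{M}_1,\mathcal{M}_3,\mathcal{M}_4$, which is legitimate since shift invariance together with (2) supplies the missing constraint on the corner $\dd$.
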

\begin{corollary}\label{tworef1}
Let $\theta$ be a nonconstant  inner function. Then the space $\mathcal{T}(K_\theta^\perp)$ of dual truncated Toeplitz operators is weak*(WOT)--closed and $2$--reflexive.
\end{corollary}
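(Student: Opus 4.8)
The plan is to realize $\mathcal{T}(K_\theta^\perp,K_\alpha^\perp)$ as the annihilator of a family $\mathcal{N}\subset\mathcal{F}_2$ of operators of rank at most two, by rewriting each of the four conditions of Theorem \ref{th8.12} as an orthogonality relation against such operators; this is the ``translation into the language of rank--two operators'' announced in the Introduction. Granted this, both assertions follow exactly as in the proof of Theorem \ref{th53}: an annihilator of a set of finite rank operators is automatically weak${}^*$(WOT)--closed, so $\mathcal{T}(K_\theta^\perp,K_\alpha^\perp)=\mathcal{N}^\perp$ is weak${}^*$(WOT)--closed; and, since $\mathcal{N}\subset\mathcal{T}(K_\theta^\perp,K_\alpha^\perp)_\perp\cap\mathcal{F}_2$ (which is clear from $\mathcal{N}^\perp=\mathcal{T}(K_\theta^\perp,K_\alpha^\perp)$ and $\mathcal{N}\subset\mathcal{F}_2$),
\[
\mathcal{T}(K_\theta^\perp,K_\alpha^\perp)\subset\Big(\mathcal{T}(K_\theta^\perp,K_\alpha^\perp)_\perp\cap\mathcal{F}_2\Big)^\perp\subset\mathcal{N}^\perp=\mathcal{T}(K_\theta^\perp,K_\alpha^\perp),
\]
which forces equality, i.e.\ $2$--reflexivity.

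To build $\mathcal{N}$ I would fix $D\in\mathcal{B}(K_\theta^\perp,K_\alpha^\perp)$, write it as a matrix with respect to $K_\theta^\perp=\theta H^2\oplus H^2_-$ and $K_\alpha^\perp=\alpha H^2\oplus H^2_-$, and use that $\langle D,f\otimes g\rangle=\langle Df,g\rangle$ for $f\in K_\theta^\perp$, $g\in K_\alpha^\perp$ (with $f\otimes g\in\mathcal{B}_1(K_\alpha^\perp,K_\theta^\perp)$), together with $\langle\dtha h,k\rangle=\langle Dh,k\rangle$, $\langle\dc h,u\rangle=\langle Dh,u\rangle$, $\langle\dba u,k\rangle=\langle Du,k\rangle$ and $\langle\dd u,v\rangle=\langle Du,v\rangle$ for $h\in\theta H^2$, $k\in\alpha H^2$, $u,v\in H^2_-$. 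Using $(\hat T^\theta_z)^*=\hat T^\theta_{\bar z}$, $(\check T_z)^*=\check T_{\bar z}$ and unravelling the adjoints, condition (1) of Theorem \ref{th8.12} is equivalent to
\[
\big\langle D,\,h\otimes k-(\hat T^\theta_z h)\otimes(\hat T^\alpha_z k)\big\rangle=0\qquad(h\in\theta H^2,\ k\in\alpha H^2),
\]
and condition (3) to the pair of families $\langle D,(\hat T^\theta_z h)\otimes u-h\otimes(\check T_{\bar z}u)\rangle=0$ and $\langle D,u\otimes(\hat T^\alpha_z k)-(\check T_{\bar z}u)\otimes k\rangle=0$ ($h\in\theta H^2$, $k\in\alpha H^2$, $u\in H^2_-$). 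Each operator appearing here has rank at most two, and these relations say that three of the four matrix blocks of $D$ are of Toeplitz/Hankel type (cf.\ Theorems \ref{thm5.7} and \ref{shinth}).

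The two remaining conditions carry the conjugation $C_\alpha$, and their translation is the one delicate step. As in the proof of Proposition \ref{cor5.3}(2), the maps $(P^-C_\alpha M_{\bar\theta})_{|\theta H^2}$ and $(M_\theta C_\alpha)_{|H^2_-}$ are the bounded \emph{antilinear} maps $h\mapsto P^-(\alpha\theta\bar z\bar h)$ and $u\mapsto\alpha\theta\bar z\bar u$. Testing condition (2), namely $\dd=(P^-C_\alpha M_{\bar\theta})_{|\theta H^2}(\dtha)^*(M_\theta C_\alpha)_{|H^2_-}$, against $u,v\in H^2_-$ and moving these two antilinear maps to the other side of the inner product (their two conjugations cancel, leaving a bona fide rank--two operator) gives $\langle Du,v\rangle=\langle D(\alpha\theta\bar z\bar v),\alpha\theta\bar z\bar u\rangle$, that is
\[
\big\langle D,\,u\otimes v-(\alpha\theta\bar z\bar v)\otimes(\alpha\theta\bar z\bar u)\big\rangle=0\qquad(u,v\in H^2_-),
\]
where $\alpha\theta\bar z\bar v\in\theta H^2$ and $\alpha\theta\bar z\bar u\in\alpha H^2$ (recall $\bar u,\bar v\in zH^2$). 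In the same way, testing $P^-(D(\theta))=P^-(\theta\alpha\overline{D^*(\alpha)})$ and $P^-(D^*(\alpha))=P^-(\theta\alpha\overline{D(\theta)})$ against $w\in H^2_-$ and transferring $D^*$ to $D$ (again the conjugations cancel in pairs) turns condition (4) into $\langle D,\theta\otimes w-(\theta\alpha\bar w)\otimes\alpha\rangle=0$ and $\langle D,w\otimes\alpha-\theta\otimes(\theta\alpha\bar w)\rangle=0$ ($w\in H^2_-$), again of rank at most two. The main obstacle is exactly this conjugation bookkeeping: one must track which slot of $f\otimes g$ each conjugation lands in, so as to be sure that for every fixed choice of vectors the displayed difference is a genuine element of $\mathcal{F}_2\subset\mathcal{B}_1(K_\alpha^\perp,K_\theta^\perp)$, and that $D$ annihilating the whole family is equivalent to the corresponding operator identity of Theorem \ref{th8.12} (and not to its complex conjugate); the rest is routine.

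Finally, taking $\mathcal{N}$ to be the union of the six families of operators of rank at most two produced above, Theorem \ref{th8.12} yields precisely $\mathcal{N}^\perp=\mathcal{T}(K_\theta^\perp,K_\alpha^\perp)$, and the theorem follows from the first paragraph. Corollary \ref{tworef1} is the case $\theta=\alpha$.
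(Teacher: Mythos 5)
Your proposal is correct and follows essentially the same route as the paper: the six rank--two families you construct coincide (up to sign and parametrization) with the sets $\mathcal{M}_1,\dots,\mathcal{M}_6$ of Lemma \ref{lranktwo}, and the concluding annihilator argument is exactly the paper's proof of Theorem \ref{tworef}, with the corollary obtained as the case $\theta=\alpha$.
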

Before we prove Theorem \ref{tworef} let us introduce some notations which allow us to formulate Theorem \ref{thm5.7} differently. Denote
\begin{displaymath}
\begin{split}
\mathcal{M}_1&=\Big\{\theta h\otimes\alpha  g- z\theta h\otimes z\alpha g\ \colon\quad h, g\in H^2\Big\},\\
\mathcal{M}_2&=\Big\{\alpha\theta h\otimes \alpha\theta g-\bar z\bar g\otimes\bar z\bar h\ \colon\  h,g\in H^2 \Big\},\\
\mathcal{M}_3&=\Big\{z\theta h\otimes\bar z\bar g- \theta h\otimes\bar z^2\,\bar g\ \colon\  h, g\in H^2\Big\},\\
\mathcal{M}_4&=\Big\{\bar z\bar h \otimes z\alpha g - \bar z^2\,\bar h\otimes\alpha g\ \colon\  h, g\in H^2\Big\},\\
\mathcal{M}_5&=\Big\{\theta\otimes\bar z\bar g- \theta\alpha z g\otimes\alpha\ \colon\   g\in H^2\Big\},\\
\mathcal{M}_6&=\Big\{\theta\otimes\alpha\theta z g-\bar z\bar g\otimes\alpha\ \colon\   g\in H^2\Big\}.
\end{split}
\end{displaymath}

%\begin{align}
%\mathcal{M}_1&=\Big\{f_1\otimes f_2- C_\theta f_2\otimes C_\theta f_1\quad\text{such that}\quad f_1,f_2\in K_\theta^\perp\Big\},\\
%\mathcal{M}_2&=\Big\{\theta h\otimes\theta  g- z\theta h\otimes z\theta g\quad\text{such that}\quad h, g\in H^2\Big\},\\
%\mathcal{M}_3&=\Big\{z\theta h\otimes\bar z\bar g- \theta h\otimes\bar z^2\bar g\quad\text{such that}\quad h, g\in H^2\Big\},\\
%\mathcal{M}_4&=\Big\{\theta\otimes\bar z\bar g- \theta^2 z g\otimes\theta\quad\text{such that}\quad \bar z\bar g\in H^2_-\Big\}\\
%\mathcal{M}_5&=\Big\{\theta\otimes\theta^2 z g-\bar z\bar g\otimes\theta\quad\text{such that}\quad \bar z\bar g\in H^2_-\Big\}
%\end{align}

%Let $\theta$ be an inner function and let $D=\begin{bmatrix}\da&\db\\\dc&\dd\end{bmatrix}\in  \mathcal{B}(K_\theta^\perp)$.

\begin{lemma}\label{lranktwo}	
Let $\theta$, $\alpha$ be  inner functions and let $D\in  \mathcal{B}(K_\theta^\perp,K_\alpha^\perp)$. Then
 \begin{enumerate}
 \item $\daa=\hat{T}_{\overline{z}}^{\alpha}\daa\hat{T}_z^{\theta}$ if and only if $D\in \mathcal{M}_1^\perp$;
    \item $\dd=(P^-C_{\alpha}M_{\bar\theta})_{|\theta H^2}\left(\daa\right)^*
			(M_{\theta}C_{\alpha})_{|H^2_-}$ if and only if $D\in \mathcal{M}_2^\perp$;
    \item $\dc\hat{T}_z^{\theta}=\check{T}_z \dc$ if and only if $D\in \mathcal{M}_3^\perp$;
  \item $(\dba)^*\hat{T}_z^\alpha =\check{T}_z (\dba)^*$ if and only if $D\in \mathcal{M}_4^\perp$;
    \item $P^-(D(\theta))=P^-(\theta\alpha\overline{D^*(\alpha)})$ if and only if $D\in \mathcal{M}_5^\perp$;
    \item $P^-(D^*(\alpha))=P^-(\theta\alpha\overline{D(\theta)})$ if and only if $D\in \mathcal{M}_6^\perp$.
 \end{enumerate}
\end{lemma}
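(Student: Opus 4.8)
The plan is to translate, one at a time, the individual equalities making up conditions (1)--(4) of Theorem~\ref{th8.12} into the statement that $D$ annihilates a generic element of the corresponding set $\mathcal{M}_i$. The only ingredients needed are the dual pairing $\langle D,f\otimes g\rangle=\langle Df,g\rangle$ and the block form of $D$ relative to $K_\theta^\perp=\theta H^2\oplus H^2_-$ and $K_\alpha^\perp=\alpha H^2\oplus H^2_-$. The key point, used repeatedly, is that in every element $f\otimes g$ of an $\mathcal{M}_i$ the second leg $g$ sits in a single summand of $K_\alpha^\perp$, so that $\langle Df,g\rangle$ only sees the corresponding block of the matrix of $D$ (one of $\daa,\dba,\dc,\dd$). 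For parts (1), (3), (4) the two legs moreover sweep out full subspaces as $h,g$ run over $H^2$ (e.g. $\{\theta h:h\in H^2\}=\theta H^2$, $\{\alpha g:g\in H^2\}=\alpha H^2$, $\{\bar z\bar g:g\in H^2\}=H^2_-$), so that an identity $\langle Bx,y\rangle=0$ for all admissible $x,y$ forces the block $B$ to vanish; parts (2), (5), (6) will instead be reduced to the vanishing of a single vector paired against all of $H^2_-=\{\bar z\bar g:g\in H^2\}$.

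For (1), I would expand $\langle D,\theta h\otimes\alpha g-z\theta h\otimes z\alpha g\rangle$ and use $z\theta h=\hat T^\theta_z(\theta h)$, $z\alpha g=\hat T^\alpha_z(\alpha g)$ and $(\hat T^\alpha_z)^*=\hat T^\alpha_{\bar z}$ to rewrite it as $\langle(\daa-\hat T^\alpha_{\bar z}\,\daa\,\hat T^\theta_z)(\theta h),\alpha g\rangle$; by the remark above this vanishes for all $h,g$ exactly when $\daa=\hat T^\alpha_{\bar z}\,\daa\,\hat T^\theta_z$. Part (3) is of the same type: $\langle D,z\theta h\otimes\bar z\bar g-\theta h\otimes\bar z^2\bar g\rangle$ equals $\langle(\dc\,\hat T^\theta_z-\check T_z\,\dc)(\theta h),\bar z\bar g\rangle$, using $z\theta h=\hat T^\theta_z(\theta h)$, $\bar z^2\bar g=\check T_{\bar z}(\bar z\bar g)$ and $(\check T_{\bar z})^*=\check T_z$ (the identities $\hat T^\theta_z(\theta h)=z\theta h$ and $\check T_{\bar z}u=\bar z u$ for $u\in H^2_-$ being immediate from the definitions); part (4) is the mirror computation on the other off-diagonal block. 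Alternatively (4) follows from (3), and (6) from (5) below, via the observation that $\langle D,f\otimes g\rangle=\overline{\langle D^*,g\otimes f\rangle}$, so $D\in\mathcal{M}^\perp$ iff $D^*$ annihilates the set of transposed tensors; applying this while interchanging the roles of $\theta$ and $\alpha$ turns $\mathcal{M}_3$ into $\mathcal{M}_4$ and condition (3) into condition (4), and likewise $\mathcal{M}_5$ into $\mathcal{M}_6$.

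For (5), the elementary identity needed is
\[
\langle\theta\alpha\,\overline{D^*(\alpha)},\,\bar z\bar g\rangle=\langle\theta\alpha zg,\,D^*(\alpha)\rangle=\langle D(\theta\alpha zg),\,\alpha\rangle ,
\]
obtained by conjugating inside the integral. Together with $\langle D(\theta),\bar z\bar g\rangle=\langle P^-(D(\theta)),\bar z\bar g\rangle$ and the fact that $P^-(D(\theta))-P^-(\theta\alpha\,\overline{D^*(\alpha)})$ already lies in $H^2_-$, this gives
\[
\langle D,\ \theta\otimes\bar z\bar g-\theta\alpha zg\otimes\alpha\rangle=\big\langle P^-(D(\theta))-P^-(\theta\alpha\,\overline{D^*(\alpha)}),\ \bar z\bar g\big\rangle ,
\]
which vanishes for all $g\in H^2$ precisely when $P^-(D(\theta))=P^-(\theta\alpha\,\overline{D^*(\alpha)})$; and (6) then follows by the duality just described.

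The only part calling for a genuine, if short, computation is (2). Here I would expand
\[
\langle D,\ \alpha\theta h\otimes\alpha\theta g-\bar z\bar g\otimes\bar z\bar h\rangle=\langle\daa(\alpha\theta h),\alpha\theta g\rangle-\langle\dd(\bar z\bar g),\bar z\bar h\rangle ,
\]
and separately evaluate the matrix entry of $X:=(P^-C_\alpha M_{\bar\theta})_{|\theta H^2}(\daa)^*(M_\theta C_\alpha)_{|H^2_-}$: since $C_\alpha=M_\alpha J$ one has $M_\theta C_\alpha(\bar z\bar g)=\theta\alpha g\in\theta H^2$, and then, the projection $P^-$ being immaterial against $\bar z\bar h\in H^2_-$ and using the antilinear relation $\langle C_\alpha a,b\rangle=\langle C_\alpha b,a\rangle$ coming from~\eqref{ctheta},
\[
\langle X(\bar z\bar g),\bar z\bar h\rangle=\langle C_\alpha M_{\bar\theta}(\daa)^*(\theta\alpha g),\bar z\bar h\rangle=\langle C_\alpha(\bar z\bar h),M_{\bar\theta}(\daa)^*(\theta\alpha g)\rangle=\langle\theta\alpha h,(\daa)^*(\theta\alpha g)\rangle=\langle\daa(\alpha\theta h),\alpha\theta g\rangle .
\]
Since $\dd$ and $X$ both act on $H^2_-$ and $\bar z\bar g,\bar z\bar h$ exhaust $H^2_-$, the identity $\dd=X$ (condition (2)) is equivalent to $\langle\dd(\bar z\bar g),\bar z\bar h\rangle=\langle\daa(\alpha\theta h),\alpha\theta g\rangle$ for all $g,h\in H^2$, i.e. to $D\in\mathcal{M}_2^\perp$. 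I expect the main obstacle to be nothing more than keeping the bookkeeping of the conjugations $C_\theta,C_\alpha$ correct in this last step; the rest is routine.
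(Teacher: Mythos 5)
Your proposal is correct and follows essentially the same route as the paper: each operator identity is paired against a generic rank-two tensor from the corresponding $\mathcal{M}_i$, using the block structure of $D$, the antilinear adjoint relation for $C_\alpha$ in part (2), and conjugation inside the integral for parts (5)--(6). The only cosmetic differences are that the paper handles (4) and (6) by direct mirror computations rather than your (equally valid) transpose/adjoint duality, and it outsources part (1) to a citation instead of writing out the two-line computation you give.
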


\begin{proof} Condition (1), in some sense, was shown in \cite{KKMP}.
To prove condition (2) take $h,g\in H^2$. Then by \eqref{ctheta},
\begin{align*}
\langle (P^-C_{\alpha}M_{\bar\theta})_{|\theta H^2}&\left(\daa\right)^*
			(M_{\theta}C_{\alpha})_{|H^2_-}\bar z\bar g,\bar z \bar h\rangle-\langle \dd\bar z\bar g,\bar z\bar h\rangle
\\&=
\langle M_{\theta}C_{\alpha} \bar z \bar h,\left(\daa\right)^*
			(M_{\theta}C_{\alpha})_{|H^2_-}\bar z\bar g\rangle-\langle \dd\bar z\bar g,\bar z\bar h\rangle \\
&=\langle \daa({\theta}{\alpha}  h),{\theta}{\alpha} g\rangle-\langle \dd\bar z\bar g,\bar z\bar h\rangle
\\
&=\langle D \alpha\theta h,\alpha\theta g \rangle-\langle D\bar z\bar g,\bar z\bar h\rangle
=<D,\alpha\theta h\otimes \alpha\theta g-\bar z\bar g\otimes\bar z\bar h>.
\end{align*}
Consider condition (3) and take $ h, g\in H^2$. Then
\begin{align*}
\langle \dc\hat{T}_z^{\theta}\theta h, \bar z\bar g\rangle-\langle \check{T}_z &\dc\theta h,\bar z\bar g\rangle \\
&=\langle \dc z\theta h, \bar z\bar g\rangle-\langle \dc \theta h,\bar z^2\bar g\rangle \\&=
<D, z\theta h\otimes \bar z\bar g- \theta h\otimes\bar z^2\bar g>.
\end{align*}
 This implies equivalence in (3).
 Similarly, the equivalence in (4) is a consequence of the fact that for all $g,h\in H^2$ the following holds:
\begin{align*}
\langle \bar z\bar h,(\dba)^*\hat{T}_z^{\alpha}\alpha g\rangle-&\langle \bar z\bar h,\check{T}_z (\dba)^*\alpha g\rangle \\
&=\langle \dba \bar z\bar h ,z\alpha g \rangle-\langle \dba \bar z^2\bar h,\alpha g\rangle
\\&=
\langle D, \bar z\bar h \otimes z\alpha g - \bar z^2\bar h\otimes\alpha g\rangle.
\end{align*}

Now  we will show (5). Note that the equality $P^-(D(\theta))=P^-(\theta\alpha\overline{D^*(\alpha)})$ is equivalent to
\[0=\langle D(\theta), \bar z\bar g\rangle-\langle \theta\alpha\overline{D^*(\alpha)},\bar z\bar g\rangle \quad\text{for all}\quad \bar z\bar g\in H^2_-.\] Observe also that
\begin{multline*}
\langle \theta\alpha\overline{D^*(\alpha)},\bar z\bar g\rangle=\int_{\mathbb{T}} \theta\alpha\overline{D^*(\alpha)} z g\,dm=
\overline{\int_{\mathbb{T}} D^*(\alpha)\bar\theta\bar\alpha\bar  z\bar g\,dm}\\=
\overline{\langle D^*(\alpha),\theta\alpha  z g\rangle}=\langle \theta\alpha  z g, D^*(\alpha)\rangle=\langle D(\theta\alpha  z g), \alpha\rangle.
\end{multline*}
Hence \[0=\langle D(\theta), \bar z \bar g\rangle-\langle D (\theta\alpha z g),\alpha\rangle
=<D, \theta\otimes \bar z\bar g- \theta\alpha z g\otimes\alpha> . \]

To show (6) note firstly that the equality $P^-(D^*(\alpha))=P^-(\theta\alpha\overline{D(\theta)})$ is equivalent to
\[0=\langle D^*(\alpha), \bar z\bar g\rangle-\langle \theta\alpha\overline{D(\theta)},\bar z\bar g\rangle \quad\text{for all}\quad \bar z\bar g\in H^2_-.\] We also have
\begin{multline*}
\langle \theta\alpha\overline{D(\theta)},\bar z\bar g\rangle=\int_{\mathbb{T}} \theta\alpha\overline{D(\theta)} z g\,dm=
\overline{\int_{\mathbb{T}} D(\theta)\bar\theta\bar\alpha\bar  z\bar g\,dm}\\=
\overline{\langle D(\theta),\alpha \theta z g\rangle}=\langle\alpha \theta z g, D(\theta)\rangle.
\end{multline*}Hence \[0=\langle D(\theta), \alpha\theta z g\rangle-\langle D (\bar z\bar g),\alpha\rangle
=<D,\theta\otimes \alpha\theta z g-  \bar z\bar g\otimes\alpha> . \]
\end{proof}

\begin{proof}[Proof of Theorem \ref{tworef}]
Theorem \ref{th8.12} together with Lemma \ref{lranktwo} give
\[\Big(\mathcal{T}(K_\theta^\perp,K_\alpha^\perp)\Big)_\perp\supset \bigcup_{l=1}^6\mathcal{M}_l,\quad \mathcal{T}(K_\theta^\perp,K_\alpha^\perp)\supset\bigcap_{l=1}^6\mathcal{M}_l^\perp=
\Big(\bigcup_{l=1}^6\big(\mathcal{M}_l\big)\Big)^\perp.\]
Hence the space $\mathcal{T}(K_\theta^\perp)$ is weak${}^*$--closed (WOT--closed), since it is characterized by annihilating some trace class (finite rank) operators. Moreover,
\[\big(\mathcal{T}(K_\theta^\perp,K_\alpha^\perp)_\perp\cap\mathcal{F}_2\big)^\perp\subset
\Big(\bigcup_{l=1}^6\big(\mathcal{M}_l\cap\mathcal{F}_2\big)\Big)^\perp=\Big(\bigcup_{l=1}^6\mathcal{M}_l\Big)^\perp\subset \mathcal{T}(K_\theta^\perp,K_\alpha^\perp)\]
which means $2$--reflexivity of
$\mathcal{T}(K_\theta^\perp, K_\alpha^\perp)$.
\end{proof}

\section{Functional calculus for ADTTO and reflexivity of its subspaces}
 The property of reflexivity ($k$--reflexivity) is generally not hereditary for subspaces, however, if an additional condition (property $\ak$) is satisfied, it is hereditary.
 Recall from \cite{BFP} the following
definition.
\begin{definition}\label{propA}
A subspace $\s\subset\bhk$ has {\it property $\ak$}, if $\s$ is
weak*--closed and for any weak*--continuous functional $\Lambda$ on
$\s$ there is $g\in \mathcal{F}_k$ such that  for $S\in\s$ we have %$\Lambda(S)=tr(Sg)$
\mbox{$\Lambda
(S)=<S,g>$}.
Let $r\ge 1$. It is said that $\s$ has {\it property $\ak(r)$},
if $\s$ has property $\ak$ and for any $\varepsilon>0$ an operator
$g\in \mathcal{F}_k$ can be chosen such that $\|g\|_1\le
(r+\varepsilon)\|\Lambda\|$.% (where $\|\cdot\|_1$ is the trace norm).
\end{definition}
The importance of the definition above in our setting is shown in the following proposition
recalled from  \cite{Larson}.
\begin{proposition}\label{lar}Let  $\s\subset \bhk$ be $k$-reflexive. Then
any weak*--closed subspace ${\s}_1\subset\s$ is $k$--reflexive if and
only if $\s$ has property $\ak$.
\end{proposition}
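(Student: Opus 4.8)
The plan is to prove both directions of Proposition~\ref{lar} directly from the duality $\bhk=\big(\mathcal{B}_1(\ka,\h)\big)^*$ recalled in Section~2. The tools are just two: that the kernel of a weak${}^*$--continuous linear functional is weak${}^*$--closed, and that a weak${}^*$--closed subspace is itself a dual Banach space, so weak${}^*$ Hahn--Banach separation is available on it. The recurring bookkeeping fact is that for subspaces $\s_1\subset\s$ one has $\s_\perp\subset(\s_1)_\perp$, hence $\s_\perp\cap\mathcal{F}_k\subset(\s_1)_\perp\cap\mathcal{F}_k$ and therefore $\big((\s_1)_\perp\cap\mathcal{F}_k\big)^\perp\subset\big(\s_\perp\cap\mathcal{F}_k\big)^\perp$.

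For the implication $(\Leftarrow)$ I would assume $\s$ has property~$\ak$, fix a weak${}^*$--closed $\s_1\subset\s$, and take $T\in\big((\s_1)_\perp\cap\mathcal{F}_k\big)^\perp$, the reverse inclusion $\s_1\subset\big((\s_1)_\perp\cap\mathcal{F}_k\big)^\perp$ being automatic from Definition~\ref{dfref}. The bookkeeping fact together with $k$--reflexivity of $\s$ gives $T\in\big(\s_\perp\cap\mathcal{F}_k\big)^\perp=\s$. If $T\notin\s_1$, weak${}^*$ Hahn--Banach separation (applicable since $\s_1$ is weak${}^*$--closed in the dual space $\s$) supplies a weak${}^*$--continuous $\Lambda$ on $\s$ with $\Lambda|_{\s_1}=0$ and $\Lambda(T)\ne0$; property~$\ak$ (Definition~\ref{propA}) then provides $g\in\mathcal{F}_k$ with $\Lambda(S)=<S,g>$ for all $S\in\s$, so $g\in(\s_1)_\perp\cap\mathcal{F}_k$ while $<T,g>=\Lambda(T)\ne0$, contradicting the choice of $T$. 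Hence $T\in\s_1$ and $\s_1$ is $k$--reflexive.

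For the implication $(\Rightarrow)$ I would assume every weak${}^*$--closed subspace of $\s$ is $k$--reflexive and verify property~$\ak$ for $\s$ (which is weak${}^*$--closed, being $k$--reflexive). Given a weak${}^*$--continuous $\Lambda$ on $\s$, the case $\Lambda=0$ is trivial; otherwise fix $T_0\in\s$ with $\Lambda(T_0)=1$ and set $\s_1=\ker\Lambda$, which is weak${}^*$--closed, hence $k$--reflexive by hypothesis. Since $T_0\notin\s_1=\big((\s_1)_\perp\cap\mathcal{F}_k\big)^\perp$ there is $g\in(\s_1)_\perp\cap\mathcal{F}_k$ with $<T_0,g>\ne0$; after rescaling, $<T_0,g>=1$. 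For $S\in\s$ write $S=(S-\Lambda(S)T_0)+\Lambda(S)T_0$ with $S-\Lambda(S)T_0\in\s_1$; since $g\in(\s_1)_\perp$ this gives $<S,g>=\Lambda(S)<T_0,g>=\Lambda(S)$, so $g\in\mathcal{F}_k$ represents $\Lambda$ on $\s$ and $\s$ has property~$\ak$.

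I do not expect a genuine obstacle: the whole statement is formal manipulation of (pre)annihilators combined with the standard duality and the two weak${}^*$--topology facts above. The only point requiring care is keeping track of which space each rank-$\le k$ operator annihilates; in particular no quantitative norm estimate enters, since this statement involves only property~$\ak$ and not $\ak(r)$.
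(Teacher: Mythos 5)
Your proof is correct and complete: both directions are the standard weak${}^*$ duality argument (the inclusion $\s_\perp\cap\mathcal{F}_k\subset(\s_1)_\perp\cap\mathcal{F}_k$ plus Hahn--Banach separation for $(\Leftarrow)$, and applying $k$--reflexivity to $\ker\Lambda$ for $(\Rightarrow)$), and the two delicate points -- that the representing $g$ from property $\ak$ need only implement $\Lambda$ on $\s$, and that $\ker\Lambda$ is weak${}^*$--closed in $\bhk$ because $\s$ is -- are handled properly. The paper itself gives no proof of Proposition~\ref{lar}, merely recalling it from Larson's paper, so there is nothing to compare against beyond noting that your argument is exactly the expected one.
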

\begin{theorem}\label{funcaldual}
Let $\Phi\colon L^\infty\to \mathcal{T}(K_\theta^\perp, K_\alpha^\perp)\subset \mathcal{B}(K_\theta^\perp, K_\alpha^\perp)$, $\varphi\mapsto D_\varphi^{\theta,\alpha}$. Then
\begin{enumerate}
\item $\Phi $ is a linear isometry onto $\mathcal{T}(K_\theta^\perp, K_\alpha^\perp)$,
  \item $\Phi$ is  continuous in weak${}^*$ topology in both spaces,
  \item $\Phi_{|H^\infty}$ is a bijection onto $\mathcal{A}(K_\theta^\perp, K_\alpha^\perp)$,
\item $\mathcal{T}(K_\theta^\perp,K_\alpha^\perp)$ has property $\mathbb{A}_{1/2}(1)$,
\item any weak${}^*$ closed subspace $\s\subset \mathcal{T}(K_\theta^\perp,K_\alpha^\perp)$ is $2$--reflexive,
\item $\mathcal{A}(K_\theta^\perp, K_\alpha^\perp)$ is weak* closed and $2$--reflexive.
\end{enumerate}
\end{theorem}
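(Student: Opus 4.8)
The plan is to obtain parts (1)--(3) directly from the definitions, to derive (4) by feeding the functional calculus into Bourgain's weak factorization of $L^1(\mathbb{T})$, and then to read off (5) and (6) from Proposition~\ref{lar} together with Theorem~\ref{tworef}.

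For (1) I would note that linearity is clear and that $\Phi$ is onto $\mathcal{T}(K_\theta^\perp,K_\alpha^\perp)$ by definition, since $D_\varphi^{\theta,\alpha}$ is bounded precisely when $\varphi\in L^\infty$. The isometry follows from two inequalities: $D_\varphi^{\theta,\alpha}=P_\alpha^\perp M_\varphi|_{K_\theta^\perp}$ is a compression of $M_\varphi$, so $\|D_\varphi^{\theta,\alpha}\|\le\|\varphi\|_\infty$; conversely the $(2,2)$-block of $D_\varphi^{\theta,\alpha}$ in the matrix form of Section~3 is $\check{T}_\varphi=JT_{\bar\varphi}J|_{H^2_-}$ (Proposition~\ref{propc2}(3)), a compression of $D_\varphi^{\theta,\alpha}$ whose norm is $\|T_{\bar\varphi}\|=\|\varphi\|_\infty$ by Brown--Halmos. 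Injectivity is then automatic (or use \eqref{sym}). For (2) I would exhibit the pre-adjoint of $\Phi$, namely $\Psi\colon\mathcal{B}_1(K_\alpha^\perp,K_\theta^\perp)\to L^1$, $\sum_n f_n\otimes\psi_n\mapsto\sum_n f_n\bar\psi_n$, which is well defined (the sum is independent of the representation of $t$) and contractive, and which satisfies $\Psi^*=\Phi$ because $\langle D_\varphi^{\theta,\alpha},\sum_n f_n\otimes\psi_n\rangle=\int_{\mathbb{T}}\varphi\sum_n f_n\bar\psi_n\,dm$; hence $\Phi$ is weak$^*$-continuous. Part (3) is then immediate from (1) and the definition of $\mathcal{A}(K_\theta^\perp,K_\alpha^\perp)$.

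The heart of the matter is (4). Weak$^*$-closedness of $\mathcal{T}(K_\theta^\perp,K_\alpha^\perp)$ is given by Theorem~\ref{tworef}. Given a weak$^*$-continuous functional $\Lambda$ on it, $\Lambda\circ\Phi$ is a weak$^*$-continuous functional on $L^\infty$ by (2), hence is represented by some $h\in L^1(\mathbb{T})$ with $\|h\|_1=\|\Lambda\circ\Phi\|=\|\Lambda\|$ (the last equality by the isometry in (1)), so that $\Lambda(D_\varphi^{\theta,\alpha})=\int_{\mathbb{T}}\varphi h\,dm$. Now I would apply Bourgain's theorem on the weak factorization of $L^1(\mathbb{T})$: for every $\varepsilon>0$ one can write $h=b_1\bar a_1+b_2\bar a_2$ with $a_i,b_i\in H^2$ and $\|a_1\|_2\|b_1\|_2+\|a_2\|_2\|b_2\|_2\le(1+\varepsilon)\|h\|_1$. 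Put $f_i=\bar z\bar a_i$ and $\psi_i=\bar z\bar b_i$; then $f_i\in H^2_-\subset K_\theta^\perp$, $\psi_i\in H^2_-\subset K_\alpha^\perp$, $\|f_i\|_2=\|a_i\|_2$, $\|\psi_i\|_2=\|b_i\|_2$ and $f_i\bar\psi_i=b_i\bar a_i$, so $g:=f_1\otimes\psi_1+f_2\otimes\psi_2\in\mathcal{F}_2$ satisfies $\langle D_\varphi^{\theta,\alpha},g\rangle=\int_{\mathbb{T}}\varphi h\,dm=\Lambda(D_\varphi^{\theta,\alpha})$ for all $\varphi\in L^\infty$, i.e. $\Lambda(D)=\langle D,g\rangle$ on $\mathcal{T}(K_\theta^\perp,K_\alpha^\perp)$, together with $\|g\|_1\le(1+\varepsilon)\|\Lambda\|$. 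This is exactly property $\mathbb{A}_{1/2}(1)$. I expect the one genuinely substantive input to be Bourgain's theorem, in the sharp form with exactly two rank-one terms and constant $1+\varepsilon$; note that $\theta$ and $\alpha$ enter only through $H^2_-\subset K_\theta^\perp\cap K_\alpha^\perp$, so the asymmetric case is no harder than the symmetric one.

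Finally, (5) follows from Proposition~\ref{lar}: since $\mathcal{T}(K_\theta^\perp,K_\alpha^\perp)$ is $2$-reflexive (Theorem~\ref{tworef}) and has property $\mathbb{A}_{1/2}$ by (4), every weak$^*$-closed subspace of it is $2$-reflexive. For (6), by Remark~\ref{danal} the space $\mathcal{A}(K_\theta^\perp,K_\alpha^\perp)$ is the intersection of $\mathcal{T}(K_\theta^\perp,K_\alpha^\perp)$ with the annihilator of the rank-one operators $\{\bar z\otimes w:\ w\in\bar zH^2_-\}$ (these make sense since $\bar zH^2_-\subset H^2_-\subset K_\alpha^\perp$ and $\bar z\in K_\theta^\perp$, the extra condition there being $D(\bar z)\perp\bar zH^2_-$); hence $\mathcal{A}(K_\theta^\perp,K_\alpha^\perp)$ is a weak$^*$-closed subspace of $\mathcal{T}(K_\theta^\perp,K_\alpha^\perp)$ and is $2$-reflexive by (5).
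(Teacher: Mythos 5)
Your proposal is correct and follows essentially the same route as the paper's proof: weak\textsuperscript{*}-continuity of $\Phi$ via the pairing $\langle D_\varphi^{\theta,\alpha},\sum_n f_n\otimes\psi_n\rangle=\int_{\mathbb{T}}\varphi\sum_n f_n\bar\psi_n\,dm$, Bourgain's two-term weak factorization of $L^1$ (as in \cite{Bourgain} and \cite{AP}) to get property $\mathbb{A}_{1/2}(1)$, Proposition~\ref{lar} for (5), and Remark~\ref{danal} for (6). The only cosmetic differences are that you prove the isometry in (1) directly through the block $\check{T}_\varphi=JT_{\bar\varphi}J$ instead of citing \cite{CKLPd}, and you plant the factorization in $H^2_-$ via $\bar z\bar a_i$, $\bar z\bar b_i$, whereas the paper uses $\theta\alpha h_i\in\theta H^2$ and $\alpha\theta h_i\in\alpha H^2$; both placements work because the unimodular factors cancel in the integral.
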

\begin{proof} The linearity of $\Phi$ is trivial. The fact that $\Phi$ is an isometry was shown in \cite[Proposition 1.1]{CKLPd}. To see the continuity
take $t=\sum_{k=0}^{\infty} f_k\otimes g_k$, $f_k\in K_\theta^\perp$, $g_k\in K_\alpha^\perp$. Assume that $\varphi_\iota\overset{w*}{\to} \varphi$, then
$$< D_{\varphi_\iota}^{\theta,\alpha},t >=\sum_{k=0}^{\infty} \langle D_{\varphi_\iota}^{\theta,\alpha}f_k,g_k\rangle=\sum_{k=0}^{\infty}\int_{\mathbb{T}} {\varphi_\iota} f_k\bar g_k dm=\int_{\mathbb{T}} \varphi_\iota \sum_{k=0}^{\infty} f_k\bar g_k dm.$$
Since $\sum_{k=0}^{\infty} f_k\bar g_k\in L^1$, then \[\int_{\mathbb{T}} \varphi_\iota \sum_{k=0}^{\infty} f_k\bar g_k dm\to \int_{\mathbb{T}} \varphi \sum_{k=0}^{\infty} f_k\bar g_k dm=<D_\varphi^{\theta, \alpha},t>.\]

To prove (4) let $\Lambda$ be a weak*--continuous functional on $\mathcal{T}(K_\theta^\perp, K_\alpha^\perp)$. Then $\Lambda\circ\Phi\colon L^\infty\to \mathbb{C}$ is a weak*--continuous functional. Hence there is  $f\in L^1$ such that
$$\Lambda(D^{\theta,\alpha}_\psi)=(\Lambda\circ\Phi)(\psi)=\int_{\mathbb{T}} \psi f dm, \quad \psi\in L^\infty.$$
Recall that $H^2\cdot\overline{H^2}$ is a proper subset of $L^1$.
By  \cite{Bourgain} and \cite[Corollary 3.3]{AP}, for any $\varepsilon>0$ there are $h_1,h_2,g_1, g_2\in H^2$ such that $f=h_1\bar h_2+g_1\bar g_2$ %=\theta \alpha h_1\overline{\theta\alpha h_2}+\theta \alpha g_1\overline{\theta\alpha g_2}$
and $\|g_1\| \|\bar g_2\|<\varepsilon$. Take $t=\theta\alpha h_1\otimes\alpha\theta h_2+\theta\alpha g_1\otimes\alpha\theta g_2\in \mathcal{B}_1(K_\theta^\perp,K_\alpha^\perp)$. Then for $\psi\in L^\infty$ we have
\begin{multline*}<D_\psi^{\theta,\alpha},t>=<D^{\theta,\alpha}_\psi,\theta\alpha h_1\otimes\alpha\theta h_2+\theta\alpha g_1\otimes\alpha\theta g_2>\\=\int_{\mathbb{T}} \psi(\theta \alpha h_1\overline{\theta\alpha h_2}+\theta \alpha g_1\overline{\theta\alpha g_2})dm=\int_{\mathbb{T}} \psi fdm=\Lambda(D^{\theta,\alpha}_\psi).\end{multline*}
Moreover, by \cite[Proposition 1.1]{CKLPd},
$\|\Lambda\|=\|\Lambda\circ\Phi\|=\|f\|_1 $ and $$\|t\|\leqslant \|\theta \alpha g_1\overline{\theta\alpha g_2}\|+\|\theta\alpha h_1\overline{\theta\alpha h_2}\|\leqslant \varepsilon+\|f\|_1+\varepsilon \leqslant \|f\|_1(1+\varepsilon_1).$$
Hence (4) follows.

 Theorem \ref{tworef} and (4) together with Proposition \ref{lar} give us (5). To prove (6) we need to show  that
 $\mathcal{A}(K_\theta^\perp, K_\alpha^\perp)$ is weak*--closed.  Remark \ref{danal} shows that an operator $D\in \mathcal{T}(K_\theta^\perp, K_\alpha^\perp)$  belongs to $\mathcal{A}(K_\theta^\perp, K_\alpha^\perp)$ if and only if  $P^- (zD(\bar z))=0$.
  This in turn is equivalent to the fact that for every $f_-\in H^2_-$,
  \[0=\langle P^- (zD(\bar z)),f_-\rangle=\langle D(\bar z),\bar zf_-\rangle=< D,\bar z\otimes\bar zf_->.\]
  Hence $\mathcal{A}(K_\theta^\perp, K_\alpha^\perp)$ is weak*--closed as a kernel of a weak* continuous functional.

\end{proof}

\end{document}